\DeclareMathOperator{\Ext}{Ext}
\DeclareMathOperator{\Hom}{Hom}
\DeclareMathOperator{\pd}{pd}
\DeclareMathOperator{\id}{id}
\DeclareMathOperator{\add}{add}
\DeclareMathOperator{\End}{End}
\newtheorem{theorem}{Theorem}[section]
\newtheorem{lemma}[theorem]{Lemma}
\newtheorem{prop}[theorem]{Proposition}
\theoremstyle{definition}
\newtheorem{mydef}[theorem]{Definition}
\begin{document}

\thispagestyle{empty}

\title{1-Auslander-Gorenstein Algebras Which Are Tilted}
\author{Stephen Zito\thanks{2010 Mathematics Subject Classification: 16G10.\ Keywords: 1-Auslander-Gorenstein algebras; Auslander algebras; tilted algebras.}}
\maketitle

\begin{abstract}
Let $\Lambda$ be a $1$-Auslander-Gorenstein algebra.  We give a necessary and sufficient condition for $\Lambda$ to be a tilted algebra.
\end{abstract}

\section{Introduction}
We set the notations for the remainder of this paper. All algebras are assumed to be finite dimensional over an algebraically closed field $k$.  If $\Lambda$ is a $k$-algebra then denote by $\mathop{\text{mod}}\Lambda$ the category of finitely generated right $\Lambda$-modules and by $\mathop{\text{ind}}\Lambda$ a set of representatives of each isomorphism class of indecomposable right $\Lambda$-modules.  Given $M\in\mathop{\text{mod}}\Lambda$, the projective dimension of $M$ in $\mathop{\text{mod}}\Lambda$ is denoted by $\pd_{\Lambda}M$ and its injective dimension by $\id_{\Lambda}M$.  We denote by $\add M$ the smallest additive full subcategory of $\mathop{\text{mod}}\Lambda$ containing $M$, that is, the full subcategory of $\mathop{\text{mod}}\Lambda$ whose objects are the direct sums of direct summands of the module $M$.  We let $\tau_{\Lambda}$ and $\tau^{-1}_{\Lambda}$ be the Auslander-Reiten translations in $\mathop{\text{mod}}\Lambda$.  $D$ will denote the standard duality functor $\Hom_k(-,k)$.  Given a $\Lambda$-module $M$, denote the kernel of a projective cover by $\Omega_{\Lambda} M$.  $\Omega_{\Lambda} M$ is called the syzygy of $M$.  Denote the cokernel of an injective envelope by $\Omega_{\Lambda}^{-1}M$.  $\Omega_{\Lambda}^{-1}M$ is called the cosyzygy of $M$.  Finally,  let $\mathop{\text{gl.dim}}\Lambda$ stand for the global dimension and $\mathop{\text{domdim}}\Lambda$ stand for the dominant dimension of an algebra $\Lambda$ (see Definition $\ref{def3}$).

Let $\Lambda$ be an algebra of finite type.  A $\Lambda$-module $M$ is an additive generator of $\mathop{\text{mod}}\Lambda$ provided every indecomposable $\Lambda$-module is a direct summand of $M$.   Auslander in $\cite{AU}$ characterized the endomorphism algebras of additive generators as those algebras which have global dimension at most $2$ and dominant dimension at least $2$.  The algebras occurring in this way are now called Auslander algebras.
\par
Another characterization of Auslander algebras concerns properties of a certain subcategory of $\mathop{\text{mod}}\Lambda$.  Let $\tilde{Q}$ be the direct sum of representatives of the isomorphism classes of all indecomposable projective-injective $\Lambda$-modules.  Let $\mathcal{C}_{\Lambda}$ be the full subcategory consisting of all modules generated and cogenerated by $\tilde{Q}$ (see Definition $\ref{def1}$ and Definition $\ref{def2}$).  When $\mathop{\text{gl.dim}}\Lambda=2$, Crawley-Boevey and Sauter showed in $\cite{CBS}$ that the algebra $\Lambda$ is an Auslander algebra if and only if there exists a tilting-cotilting $\Lambda$-module $T_{\mathcal{C}}$ in $\mathcal{C}_{\Lambda}$ (see Definition $\ref{Tilting}$).
\par
Recent work by Nguyen, Reiten, Todorov, and Zhu in $\cite{NRTZ}$ has showed the existence of a tilting or cotilting module in $\mathcal{C}_{\Lambda}$ is equivalent to the dominant dimension of $\Lambda$ being at least $2$ without any condition on the global dimension of $\Lambda$.
Iyama and Solberg defined $m$-Auslander-Gorenstein algebras in $\cite{OS}$ as algebras where the dominant dimension is equal to the Gorenstein dimension. (see Definition $\ref{def4}$).  In particular, a $1$-Auslander-Gorenstein algebra $\Lambda$ is either a selfinjective algebra or a Goresntein algebra satisfying $\id_{\Lambda}\Lambda_{\Lambda}=\mathop{\text{domdim}}\Lambda=2$.  Nguyen, Reiten, Todorov, and Zhu also showed that an algebra $\Lambda$ is $1$-Auslander-Gorenstein if and only if $\mathcal{C}_{\Lambda}$ contains a tilting-cotilting module.  Their results suggest $1$-Auslander-Gorenstein algebras are generalizations of Auslander algebras.
\par
Tilting theory is one of the main themes in the study of the representation theory of algebras.  Tilted algebras were introduced by Happel and Ringel in $\cite{HR}$.  Tilted algebras have been the subject of much investigation and several characterizations are known.  An interesting question concerns when an Auslander algebra is also a tilted algebra.  This problem was studied and answered by Zito in $\cite{Z}$.
The proof relied heavily on a recent characterization of tilted algebras by Jaworska, Malicki, and Skowro$\acute{\text{n}}$ski in $\cite{JMS}$.  They showed an algebra $\Lambda$ is tilted if and only if there exists a sincere module $M$ such that $\text{Hom}_{\Lambda}(M,\tau_{\Lambda}X)=0$ or $\text{Hom}_{\Lambda}(X,M)=0$ for every module $X\in\mathop{\text{ind}}\Lambda$.  Zito was able to show the tilting-cotilting module in $\mathcal{C}_{\Lambda}$ satisfied these conditions.
\par  
Considering $1$-Auslander-Gorenstein algebras as generalizations of Auslander algebras, it's natural to ask when such an algebra is also a tilted algebra. In this paper, we give a necessary and sufficient condition for a $1$-Auslander-Gorenstein algebra $\Lambda$ to be tilted.  The proof involves studying a particular full subcategory induced by $T_{\mathcal{C}}$ and taking advantage of certain homological properties of tilted algebras.

Here, we let  $T_{\mathcal{C}}$ be the tilting-cotilting module in $\mathcal{C}_{\Lambda}$, $\text{Cogen}T_{\mathcal{C}}$ be the class of modules $X$ in $\mathop{\text{mod}}\Lambda$ cogenerated  by $T_{\mathcal{C}}$, $\mathcal{P}^1(\Lambda)$ be the full subcategory of $\mathop{\text{mod}}\Lambda$ whose objects are all the $\Lambda$-modules $Y$ such that $\pd_{\Lambda}Y\leq1$, and $\mathcal{L}_{\Lambda}$ be the full subcategory of $\mathop{\text{mod}}\Lambda$ consisting of all indecomposable $\Lambda$-modules whose predecessors have projective dimension at most $1$ (see Definition $\ref{def6}$).
\begin{theorem}$\emph{(Theorem~2.1)}.$
Suppose $\Lambda$ is a $1$-Auslander-Gorenstein algebra.  Then $\Lambda$ is a tilted algebra if and only if $\emph{add}\mathcal{L}_{\Lambda}=\emph{Cogen}T_{\mathcal{C}}$.
\end{theorem}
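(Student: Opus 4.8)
The plan is to deduce the equivalence from the Jaworska--Malicki--Skowro\'nski criterion quoted in the introduction, taking the required sincere module to be $T_{\mathcal{C}}$ itself; recall that a tilting-cotilting module admits an embedding $\Lambda\hookrightarrow T_{\mathcal{C}}^{\,0}$ and is therefore faithful, hence sincere. Since $T_{\mathcal{C}}$ is cotilting with $\id_{\Lambda}T_{\mathcal{C}}\leq 1$, it induces a torsion pair $(\mathcal{X},\mathcal{Y})$ in $\mathop{\text{mod}}\Lambda$ whose torsion-free class is $\mathcal{Y}=\text{Cogen}\,T_{\mathcal{C}}=\{Y:\Ext^1_{\Lambda}(Y,T_{\mathcal{C}})=0\}$ and whose torsion class is $\mathcal{X}=\{X:\Hom_{\Lambda}(X,T_{\mathcal{C}})=0\}$. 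The Auslander--Reiten formula $D\,\Ext^1_{\Lambda}(X,T_{\mathcal{C}})\cong\overline{\Hom}_{\Lambda}(T_{\mathcal{C}},\tau_{\Lambda}X)$ lets me translate the first alternative of the criterion, $\Hom_{\Lambda}(T_{\mathcal{C}},\tau_{\Lambda}X)=0$, into membership $X\in\text{Cogen}\,T_{\mathcal{C}}$, while the second alternative, $\Hom_{\Lambda}(X,T_{\mathcal{C}})=0$, is exactly $X\in\mathcal{X}$. Thus, up to the discrepancy between $\Hom$ and the stable $\overline{\Hom}$, testing the criterion on $T_{\mathcal{C}}$ amounts to showing that every indecomposable lies in $\mathcal{X}$ or in $\text{Cogen}\,T_{\mathcal{C}}$, i.e. that $(\mathcal{X},\mathcal{Y})$ splits.

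For the implication $\text{add}\,\mathcal{L}_{\Lambda}=\text{Cogen}\,T_{\mathcal{C}}\Rightarrow\Lambda\text{ tilted}$ I verify the criterion for $T_{\mathcal{C}}$ directly. Let $X$ be indecomposable and suppose its second alternative fails, so some map $X\to T_{\mathcal{C}}$ is nonzero. Composing with a projection onto an indecomposable summand exhibits $X$ as a predecessor of a module in $\text{add}\,\mathcal{L}_{\Lambda}=\text{Cogen}\,T_{\mathcal{C}}$; since $\mathcal{L}_{\Lambda}$ is closed under predecessors, $X\in\mathcal{L}_{\Lambda}\subseteq\text{Cogen}\,T_{\mathcal{C}}$, whence $\Ext^1_{\Lambda}(X,T_{\mathcal{C}})=0$ and so $\overline{\Hom}_{\Lambda}(T_{\mathcal{C}},\tau_{\Lambda}X)=0$ by the Auslander--Reiten formula. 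I would then upgrade this to $\Hom_{\Lambda}(T_{\mathcal{C}},\tau_{\Lambda}X)=0$ using the $1$-Auslander--Gorenstein hypothesis to annihilate maps factoring through injectives, establishing the first alternative and hence tiltedness. Note that this inclusion also forces $\text{Cogen}\,T_{\mathcal{C}}\subseteq\mathcal{P}^1(\Lambda)$, since $\mathcal{L}_{\Lambda}\subseteq\mathcal{P}^1(\Lambda)$ holds trivially as each module is a predecessor of itself.

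For the converse I assume $\Lambda$ tilted and compare the two subcategories through the homological geometry of tilted algebras: $\mathop{\text{mod}}\Lambda$ carries a complete slice $\Sigma$, the left part $\mathcal{L}_{\Lambda}$ consists of the predecessors of $\Sigma$, and every module of $\mathcal{L}_{\Lambda}$ has projective dimension at most $1$. To obtain $\text{add}\,\mathcal{L}_{\Lambda}\subseteq\text{Cogen}\,T_{\mathcal{C}}$ I take indecomposable $M\in\mathcal{L}_{\Lambda}$ and check $\Ext^1_{\Lambda}(M,T_{\mathcal{C}})=0$, equivalently $\overline{\Hom}_{\Lambda}(T_{\mathcal{C}},\tau_{\Lambda}M)=0$; the point is that $\tau_{\Lambda}M$ still lies to the left of $\Sigma$ while $T_{\mathcal{C}}$, being generated and cogenerated by $\tilde{Q}$, sits at or to the right of $\Sigma$, so no nonzero stable map survives. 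For the reverse inclusion I take $Y\in\text{Cogen}\,T_{\mathcal{C}}$ and use the splitting of $(\mathcal{X},\mathcal{Y})$ together with the position of $T_{\mathcal{C}}$ to place $Y$ and all of its predecessors on the torsion-free side of $\Sigma$, where projective dimension is at most $1$; this yields $Y\in\mathcal{L}_{\Lambda}$.

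The principal obstacle I anticipate is the gap between $\Hom_{\Lambda}(T_{\mathcal{C}},\tau_{\Lambda}X)$ and the stable $\overline{\Hom}_{\Lambda}(T_{\mathcal{C}},\tau_{\Lambda}X)$ delivered by the Auslander--Reiten formula: the criterion demands vanishing of the former, so I must prove that every map $T_{\mathcal{C}}\to\tau_{\Lambda}X$ factoring through an injective is already zero. This is precisely where $\mathop{\text{domdim}}\Lambda=\id_{\Lambda}\Lambda_{\Lambda}=2$ and the projective-injective module $\tilde{Q}$ defining $\mathcal{C}_{\Lambda}$ should enter, constraining the injectives through which such a factorization could pass. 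A secondary difficulty, on the converse side, is pinning down the position of $T_{\mathcal{C}}$ relative to a complete slice precisely enough to identify $\text{Cogen}\,T_{\mathcal{C}}$ with the predecessor-closed, projective-dimension-one region $\mathcal{L}_{\Lambda}$; this is the categorical heart of the tilted condition.
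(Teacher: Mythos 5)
Your strategy---verifying the Jaworska--Malicki--Skowro\'nski criterion directly on $T_{\mathcal{C}}$---is genuinely different from the paper's, and your forward implication can in fact be completed, though not by the mechanism you propose. The obstacle you flag (stable versus ordinary $\Hom$) cannot be resolved by using $\mathop{\text{domdim}}\Lambda=2$ to constrain ``which injectives'' a factorization passes through: the summand $\tilde{Q}$ of $T_{\mathcal{C}}$ is itself injective, so every map $\tilde{Q}\rightarrow\tau_{\Lambda}X$ trivially factors through an injective, and stable vanishing therefore says nothing about $\Hom_{\Lambda}(\tilde{Q},\tau_{\Lambda}X)$. What actually closes the gap is a fact your argument already provides: you show $X\in\mathcal{L}_{\Lambda}$, hence $\pd_{\Lambda}X\leq1$, and for a module of projective dimension at most one the Auslander--Reiten formula holds with ordinary $\Hom$, that is, $\Ext_{\Lambda}^1(X,T_{\mathcal{C}})\cong D\Hom_{\Lambda}(T_{\mathcal{C}},\tau_{\Lambda}X)$; equivalently, $\pd_{\Lambda}X\leq1$ forces $\Hom_{\Lambda}(D\Lambda,\tau_{\Lambda}X)=0$, which kills every factorization through an injective (see \cite[IV.2]{ASS}). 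With that substitution your forward direction is correct, and it is in effect an inlining of the proof of \cite[Theorem~2.3]{Z}, whereas the paper first proves $\Lambda$ is an Auslander algebra (Propositions \ref{easy1} and \ref{easy2}) and then cites Theorem \ref{MainZ} as a black box.

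The converse, however, contains a genuine gap that the sketch does not come close to filling. It rests on three unestablished claims: that $\mathcal{L}_{\Lambda}$ coincides with the predecessors of a complete slice $\Sigma$ (false in general and slice-dependent: for a representation-infinite hereditary algebra with $\Sigma$ the slice of projectives, $\mathcal{L}_{\Lambda}=\mathop{\text{ind}}\Lambda$ strictly contains the predecessors of $\Sigma$); that $T_{\mathcal{C}}$ sits ``at or to the right of'' $\Sigma$ (never made precise); and that $Y\in\text{Cogen}\,T_{\mathcal{C}}$ forces all predecessors of $Y$ into $\text{Cogen}\,T_{\mathcal{C}}$. This last step is the crux: torsion-free classes are closed under submodules, not under predecessors, and proving exactly this predecessor-closure is the content of the theorem, so assuming it is circular. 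You also assert that the cotilting torsion pair splits, whereas the paper must prove its splitting statement (Proposition \ref{splits}), does so for the tilting-side pair $(\mathcal{T}(T_{\mathcal{C}}),\mathcal{F}(T_{\mathcal{C}}))$, and only after $\Lambda$ is known to be an Auslander algebra (which in this direction follows from tiltedness, hence finite global dimension, together with Proposition \ref{general}---a step you omit). The paper's converse replaces all slice geometry with the criterion of Theorem \ref{left}: if $X\in\text{Cogen}\,T_{\mathcal{C}}$ but $X\notin\mathcal{L}_{\Lambda}$, there is an indecomposable $Y$ with $\pd_{\Lambda}Y=2$ and $\Hom_{\Lambda}(Y,X)\neq0$, hence $\Hom_{\Lambda}(Y,T_{\mathcal{C}})\neq0$; it then places $Y$, and next $\tau_{\Lambda}Y$, in $\mathcal{T}(T_{\mathcal{C}})$ (using splitting, Ext-projectivity via Proposition \ref{Ext}, and Lemma \ref{Homological Result}) and derives $\Hom_{\Lambda}(Y,T_{\mathcal{C}})=0$ from the Auslander--Reiten formulas, a contradiction. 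To salvage your converse you would need an argument of this kind; the slice picture as stated will not produce one.
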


\subsection{Properties of the Subcategory $\mathcal{C}_{\Lambda}$}
Let $\Lambda$ be an algebra.
\begin{mydef}
\label{def1}
Let $M$ be a $\Lambda$-module.  We define $\mathop{Gen} M$ to be the class of all modules $X$ in $\mathop{\text{mod}}\Lambda$ generated by $M$, that is, the modules $X$ such that there exists an integer $d\geq0$ and an epimorphism $M^d\rightarrow X$ of $\Lambda$-modules.  Here, $M^d$ is the direct sum of $d$ copies of $M$.  Dually, we define $\mathop{Cogen}M$ to be the class of all modules $Y$ in $\mathop{\text{mod}}\Lambda$ cogenerated  by $M$, that is, the modules $Y$ such that there exist an integer $d\geq0$ and a monomorphism $Y\rightarrow M^d$ of $\Lambda$-modules.
\end{mydef}
\begin{mydef}
\label{def2}
Let $\tilde{Q}$ be the direct sum of representatives of the isomorphism classes of all indecomposable projective-injective $\Lambda$-modules.  Let $\mathcal{C}_{\Lambda}:=(\text{Gen}\tilde{Q})\cap(\text{Cogen}\tilde{Q})$ be the full subcategory consisting of all modules generated and cogenerated by $\tilde{Q}$.
\end{mydef}
When $\mathop{\text{gl.dim}}\Lambda=2$, Crawley-Boevey and Sauter showed the following characterization of Auslander algebras.
\begin{lemma}$\emph{\cite[Lemma~1.1]{CBS}}$
\label{CBS}
If $\mathop{\emph{gl.dim}}\Lambda=2$, then $\mathcal{C}_{\Lambda}$ contains a tilting-cotilting module if and only if $\Lambda$ is an Auslander algebra.
\end{lemma}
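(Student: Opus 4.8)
Since we are given $\mathop{\text{gl.dim}}\Lambda=2$, the definition of an Auslander algebra collapses to the single requirement $\mathop{\text{domdim}}\Lambda\ge2$; recall that this means the first two terms $I^0,I^1$ of a minimal injective coresolution $0\to\Lambda\to I^0\to I^1\to\cdots$ are projective, equivalently $I^0,I^1\in\text{add}\tilde{Q}$. Writing $\Omega_{\Lambda}^{-1}\Lambda=\coker(\Lambda\hookrightarrow I^0)$, the condition $I^0\in\text{add}\tilde{Q}$ says the injective envelope of $\Lambda$ is projective-injective (i.e. $\mathop{\text{domdim}}\Lambda\ge1$), and then $I^1\in\text{add}\tilde{Q}$ says exactly that $\Omega_{\Lambda}^{-1}\Lambda\in\text{Cogen}\tilde{Q}$. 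So the plan is to reduce the whole statement to: $\mathcal{C}_{\Lambda}$ contains a tilting-cotilting module if and only if $\Omega_{\Lambda}^{-1}\Lambda\in\text{Cogen}\tilde{Q}$ (together with the easy $\mathop{\text{domdim}}\Lambda\ge1$).

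For the implication ($\Leftarrow$), I would exhibit an explicit module, namely $T=\tilde{Q}\oplus\Omega_{\Lambda}^{-1}\Lambda$, and check it is tilting-cotilting and lies in $\mathcal{C}_{\Lambda}$. Since $\mathop{\text{domdim}}\Lambda\ge1$ makes $I^0$ projective, the sequence $0\to\Lambda\to I^0\to\Omega_{\Lambda}^{-1}\Lambda\to0$ is a projective resolution, so $\pd_{\Lambda}\Omega_{\Lambda}^{-1}\Lambda\le1$ and this sequence is the required coresolution of $\Lambda$ by $\text{add}T$; hence $T$ is tilting provided it is self-orthogonal. Self-orthogonality is immediate except possibly for $\Ext_{\Lambda}^1(\Omega_{\Lambda}^{-1}\Lambda,\Omega_{\Lambda}^{-1}\Lambda)$, and here the dimension shift along the injective term $I^0$ gives $\Ext_{\Lambda}^1(\Omega_{\Lambda}^{-1}\Lambda,\Omega_{\Lambda}^{-1}\Lambda)\cong\Ext_{\Lambda}^2(\Omega_{\Lambda}^{-1}\Lambda,\Lambda)=0$, the last vanishing because $\pd_{\Lambda}\Omega_{\Lambda}^{-1}\Lambda\le1$. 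Dually, $\mathop{\text{gl.dim}}\Lambda=2$ forces $\id_{\Lambda}\Lambda\le2$, whence $\id_{\Lambda}\Omega_{\Lambda}^{-1}\Lambda\le1$ and $\id_{\Lambda}T\le1$; since a self-orthogonal module of injective dimension at most one with $n$ non-isomorphic indecomposable summands is cotilting, and a tilting module automatically has $n$ such summands, $T$ is also cotilting. Finally $T\in\mathcal{C}_{\Lambda}$: the summand $\tilde{Q}$ lies there, $\Omega_{\Lambda}^{-1}\Lambda$ is a quotient of $I^0\in\text{add}\tilde{Q}$ so lies in $\text{Gen}\tilde{Q}$, and it lies in $\text{Cogen}\tilde{Q}$ precisely because $I^1\in\text{add}\tilde{Q}$, i.e. because $\mathop{\text{domdim}}\Lambda\ge2$.

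For the implication ($\Rightarrow$), suppose $T'\in\mathcal{C}_{\Lambda}$ is tilting-cotilting. Using only that $T'$ is tilting, the coresolution $0\to\Lambda\to T'^{0}\to T'^{1}\to0$ with $T'^{0},T'^{1}\in\text{add}T'\subseteq\text{Cogen}\tilde{Q}$ embeds $\Lambda$ into some $\tilde{Q}^{a}$, so the injective envelope $I^0$ of $\Lambda$ is a summand of $\tilde{Q}^{a}$ and $\mathop{\text{domdim}}\Lambda\ge1$. It then remains to upgrade this to $\mathop{\text{domdim}}\Lambda\ge2$, that is, to prove $\Omega_{\Lambda}^{-1}\Lambda\in\text{Cogen}\tilde{Q}$. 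This is the main obstacle: the formal consequences of the tilting-cotilting sequences give $\Omega_{\Lambda}^{-1}\Lambda\in\text{Gen}\tilde{Q}$ and $\Lambda\in\text{Cogen}\tilde{Q}$ for free, but each of these only records $\mathop{\text{domdim}}\Lambda\ge1$, and the cogeneration of the first cosyzygy is a genuinely second-order condition. I would attack it by comparing the given tilting coresolution of $\Lambda$ with its minimal injective coresolution through the map $\phi\colon T'^{0}\to I^0$ extending the envelope inclusion (available since $I^0$ is injective), controlling $\coker\phi$ and $\ker\phi$ by a snake-lemma argument that uses the self-orthogonality $\Ext_{\Lambda}^1(T',T')=0$; equivalently, one can phrase $\mathop{\text{domdim}}\Lambda\ge2$ as the double-centralizer condition that $\Lambda\to\End_{\End_{\Lambda}(\tilde{Q})}(\tilde{Q})$ be an isomorphism and verify it from the torsion-theoretic properties of $T'$. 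Establishing this single cogeneration statement is the crux; once it is in hand the reduction of the first paragraph finishes the proof.
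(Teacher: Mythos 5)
The paper itself offers no proof of this lemma (it is quoted from $\cite{CBS}$), so your proposal has to stand on its own, and it does not quite do so. Your backward direction (Auslander $\Rightarrow$ tilting--cotilting module in $\mathcal{C}_{\Lambda}$) is complete and correct: $T=\tilde{Q}\oplus\Omega_{\Lambda}^{-1}\Lambda$ is exactly the module appearing in Theorem \ref{NRTZ Main}(2), and your verifications (the dimension shift along the projective-injective $I^0$ for self-orthogonality, $\id_{\Lambda}\Omega_{\Lambda}^{-1}\Lambda\leq1$ from $\mathop{\text{gl.dim}}\Lambda=2$, and the counting argument that a partial cotilting module with $n$ indecomposable summands is cotilting) are all sound.

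The forward direction, however, contains a genuine gap, which you acknowledge: you never prove $\Omega_{\Lambda}^{-1}\Lambda\in\text{Cogen}\tilde{Q}$, i.e.\ $\mathop{\text{domdim}}\Lambda\geq2$. Neither of the attacks you sketch is developed, and, more importantly, neither engages the hypothesis that actually makes this direction easy: that $T'$ is \emph{cotilting}. The snake-lemma comparison via $\Ext_{\Lambda}^1(T',T')=0$ gives no visible control of $\coker\phi$, and the double-centralizer reformulation is, by M\"{u}ller's theorem, merely a restatement of the goal $\mathop{\text{domdim}}\Lambda\geq2$, not progress toward it. (Indeed, proving $\mathop{\text{domdim}}\Lambda\geq2$ from a tilting module in $\mathcal{C}_{\Lambda}$ alone is the hard content of Theorem \ref{NRTZ Main}(1).) The missing step is short once the cotilting property is used, and all the tools are in the paper: from $\mathop{\text{domdim}}\Lambda\geq1$ the sequence $0\rightarrow\Lambda\rightarrow I^0\rightarrow\Omega_{\Lambda}^{-1}\Lambda\rightarrow0$ has both $\Lambda$ and $I^0$ projective, so $\pd_{\Lambda}\Omega_{\Lambda}^{-1}\Lambda\leq1$; then Lemma \ref{small} (or a direct dimension shift along an exact sequence $0\rightarrow K\rightarrow\tilde{Q}^d\rightarrow T'\rightarrow0$ coming from $T'\in\text{Gen}\tilde{Q}$) yields $\Ext_{\Lambda}^1(\Omega_{\Lambda}^{-1}\Lambda,T')=0$; and since $T'$ is cotilting, its torsion-free class is $\text{Cogen}T'=\{M\in\mathop{\text{mod}}\Lambda : \Ext_{\Lambda}^1(M,T')=0\}$ (this is the torsion pair $(\mathcal{F}(C),\text{Cogen}(C))$ recorded after Definition \ref{def5}), so $\Omega_{\Lambda}^{-1}\Lambda\in\text{Cogen}T'\subseteq\text{Cogen}\tilde{Q}$. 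Hence the injective envelope $I^1$ of $\Omega_{\Lambda}^{-1}\Lambda$ is a direct summand of a finite direct sum of copies of $\tilde{Q}$, hence projective, and $\mathop{\text{domdim}}\Lambda\geq2$ follows, completing the argument your proposal leaves open.
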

Nguyen, Reiten, Todorov, and Zhu in $\cite{NRTZ}$ studied the existence of such tilting and cotilting modules without any condition on the global dimension of $\Lambda$ and gave a precise description.  We first recall the definition of the dominant dimension of an algebra.
\begin{mydef}
\label{def3}
Let $\Lambda$ be an algebra and let
\begin{center}
$0\rightarrow\Lambda_{\Lambda}\rightarrow I_0\rightarrow I_1\rightarrow\ I_2\rightarrow\cdots$
\end{center}
be a minimal injective resolution of $\Lambda$.  Then $\mathop{\text{domdim}}\Lambda=n$ if $I_i$ is projective for $0\leq i\leq n-1$ and $I_n$ is not projective.  If all $I_n$ are projective, we say $\text{domdim}\Lambda=\infty$.
\end{mydef}

\begin{theorem}$\emph{\cite[Corollary~2.4.2]{NRTZ}}$
\label{NRTZ Main}
Let $\mathcal{C}_{\Lambda}$ be the full subcategory consisting of all modules generated and cogenerated by $\tilde{Q}$
\begin{enumerate}
\item[\emph{(1)}] The following statements are equivalent:
\begin{enumerate}
\item[\emph{(a)}] $\mathop{\emph{domdim}}\Lambda\geq2$
\item[\emph{(b)}] $\mathcal{C}_{\Lambda}$ contains a tilting module $T_{\mathcal{C}}$
\item[\emph{(c)}] $\mathcal{C}_{\Lambda}$ contains a cotilting module $C_{\mathcal{C}}$.
\end{enumerate}
\item[\emph{(2)}] If a tilting module $T_{\mathcal{C}}$ exists in $\mathcal{C}_{\Lambda}$, then $T_{\mathcal{C}}\cong\tilde{Q}\oplus(\bigoplus_i\Omega_{\Lambda}^{-1}P_i)$, where $\Omega_{\Lambda}^{-1}P_i$ is the cosyzygy of $P_i$ and the direct sum is taken over representatives of the isomorphism classes of all indecomposable projective non-injective $\Lambda$-modules $P_i$.
\item[\emph{(3)}] If a cotilting module $C_{\mathcal{C}}$ exists in $\mathcal{C}_{\Lambda}$, then $C_{\mathcal{C}}\cong\tilde{Q}\oplus(\bigoplus_i\Omega_{\Lambda}I_i)$ where $\Omega_{\Lambda}I_i$ is the syzygy of $I_i$ and the direct sum is taken over representatives of the isomorphism classes of all indecomposable injective non-projective $\Lambda$-modules $I_i$.
\end{enumerate}
\end{theorem}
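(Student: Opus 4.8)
The plan is to prove the two implications $(a)\Rightarrow(b)$ and $(a)\Rightarrow(c)$ by explicitly exhibiting the modules named in (2) and (3), so that existence and the precise form are obtained at once, and then to prove the converses $(b)\Rightarrow(a)$ and $(c)\Rightarrow(a)$. A single computational device recurs throughout: whenever a summand of $\tilde{Q}$ appears it is \emph{simultaneously} projective and injective, while $\mathcal{C}_{\Lambda}=\Gen\tilde{Q}\cap\operatorname{Cogen}\tilde{Q}$ provides both an epimorphism from a sum of copies of $\tilde{Q}$ and a monomorphism into one. Combining these I can lift a map through a surjection from $\tilde{Q}^{b}$ using projectivity of the source, and then extend the lift along a monomorphism using injectivity of $\tilde{Q}^{b}$. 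Every $\Ext^{1}$-vanishing and every membership assertion below is an instance of this lift-then-extend maneuver.

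For $(a)\Rightarrow(b)$, start from a minimal injective resolution $0\to\Lambda\to Q^{0}\to Q^{1}\to\cdots$; since $\operatorname{domdim}\Lambda\geq2$ both $Q^{0}$ and $Q^{1}$ lie in $\add\tilde{Q}$. For each indecomposable projective non-injective $P_{i}$ the injective envelope gives $0\to P_{i}\to Q^{0}_{i}\to\Omega_{\Lambda}^{-1}P_{i}\to0$ with $Q^{0}_{i}$ projective-injective, so that $\pd_{\Lambda}\Omega_{\Lambda}^{-1}P_{i}\leq1$ and $\Omega_{\Lambda}^{-1}P_{i}\in\Gen\tilde{Q}$, while the term $Q^{1}_{i}$ following it in the minimal injective resolution of $P_{i}$ is again projective-injective and embeds $\Omega_{\Lambda}^{-1}P_{i}$, so $\Omega_{\Lambda}^{-1}P_{i}\in\operatorname{Cogen}\tilde{Q}$. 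Hence $T_{\mathcal{C}}:=\tilde{Q}\oplus(\bigoplus_{i}\Omega_{\Lambda}^{-1}P_{i})\in\mathcal{C}_{\Lambda}$. The tilting axioms then follow: projective dimension at most $1$ is clear, the sequences above assemble into a coresolution $0\to\Lambda\to T^{0}\to T^{1}\to0$ with $T^{0},T^{1}\in\add T_{\mathcal{C}}$, and $\Ext^{1}_{\Lambda}(T_{\mathcal{C}},T_{\mathcal{C}})=0$ reduces to $\Ext^{1}_{\Lambda}(\Omega_{\Lambda}^{-1}P_{i},\Omega_{\Lambda}^{-1}P_{j})=0$, which I verify by the lift-then-extend device. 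The implication $(a)\Rightarrow(c)$ and the form (3) are proved dually, interchanging syzygies with cosyzygies and $\tilde{Q}$-generation with $\tilde{Q}$-cogeneration.

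For the converses I first observe that $\tilde{Q}$ is a direct summand of \emph{every} tilting (respectively cotilting) module in $\mathcal{C}_{\Lambda}$: an indecomposable projective-injective $Q$ satisfies $\Ext^{1}_{\Lambda}(T,Q)=0$ because $Q$ is injective, hence $Q\in\Gen T$ and the resulting epimorphism splits since $Q$ is projective; the cotilting case is dual. Consequently $\Lambda\in\operatorname{Cogen}\tilde{Q}$ and $\operatorname{domdim}\Lambda\geq1$. To upgrade to $\operatorname{domdim}\Lambda\geq2$ in the cotilting case, write $0\to\Lambda\to Q^{0}\to\Omega_{\Lambda}^{-1}\Lambda\to0$ with $Q^{0}$ projective-injective and show $\Ext^{1}_{\Lambda}(\Omega_{\Lambda}^{-1}\Lambda,C)=0$: this group is the cokernel of $\Hom_{\Lambda}(Q^{0},C)\to\Hom_{\Lambda}(\Lambda,C)$, and it vanishes precisely because every map $\Lambda\to C$ extends along $\Lambda\hookrightarrow Q^{0}$, again the lift-then-extend device using $C\in\Gen\tilde{Q}$ and the injectivity of $\tilde{Q}$. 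Since $\operatorname{Cogen}C=\{X:\Ext^{1}_{\Lambda}(X,C)=0\}$ for a cotilting module, it follows that $\Omega_{\Lambda}^{-1}\Lambda\in\operatorname{Cogen}C\subseteq\operatorname{Cogen}\tilde{Q}$, so the next injective term in the resolution of $\Lambda$ is projective-injective and $\operatorname{domdim}\Lambda\geq2$. The tilting case $(b)\Rightarrow(a)$ then follows by applying this to $\Lambda^{\mathrm{op}}$ through the duality $D$, which interchanges tilting and cotilting modules while preserving $\mathcal{C}$ and the dominant dimension.

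Finally, the exact isomorphisms in (2) and (3) require a uniqueness step. Given any tilting $T\in\mathcal{C}_{\Lambda}$ and the module $T_{\mathcal{C}}$ constructed above, I will show $\Ext^{1}_{\Lambda}(T,T_{\mathcal{C}})=\Ext^{1}_{\Lambda}(T_{\mathcal{C}},T)=0$ — both vanishings are once more instances of the lift-then-extend device together with $\tilde{Q}\mid T$ — so that $T\oplus T_{\mathcal{C}}$ is rigid of projective dimension at most $1$. Since a tilting module is a maximal such object, $\add T=\add T_{\mathcal{C}}$ and hence $T\cong T_{\mathcal{C}}$; the cotilting statement is dual. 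I expect the genuine obstacle to be exactly this converse direction, namely recovering the homological invariant $\operatorname{domdim}\Lambda\geq2$ from the purely categorical existence of a relative (co)tilting module. This is where the full strength of $\mathcal{C}_{\Lambda}$ as the \emph{intersection} $\Gen\tilde{Q}\cap\operatorname{Cogen}\tilde{Q}$ is indispensable: generation alone controls the projective side and cogeneration alone the injective side, and only their conjunction forces the second injective term of the minimal injective resolution of $\Lambda$ to stay projective.
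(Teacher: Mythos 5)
The paper offers no proof of this statement: Theorem \ref{NRTZ Main} is imported verbatim from NRTZ (their Corollary 2.4.2), so your proposal can only be measured against NRTZ's argument, not against anything proved in this paper. Measured that way, your outline is essentially sound, and your ``lift-then-extend device'' is precisely NRTZ's Lemma 1.1.4, quoted here as Lemma \ref{small}: if $\pd_{\Lambda}Y\leq1$ and $X\in\Gen\tilde{Q}$, then applying $\Hom_{\Lambda}(Y,-)$ to $0\to K\to\tilde{Q}^{b}\to X\to0$ traps $\Ext^{1}_{\Lambda}(Y,X)$ between $\Ext^{1}_{\Lambda}(Y,\tilde{Q}^{b})=0$ and $\Ext^{2}_{\Lambda}(Y,K)=0$. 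I checked your individual steps and they work: the construction of $T_{\mathcal{C}}$ and the three tilting axioms under $\operatorname{domdim}\Lambda\geq2$ (the summands $Q_{i}^{0},Q_{i}^{1}$ of the minimal injective resolution of $P_{i}$ are indeed summands of $I^{0}(\Lambda),I^{1}(\Lambda)$, hence projective-injective); the splitting argument giving $\tilde{Q}\in\add T$ for any tilting or cotilting module in $\mathcal{C}_{\Lambda}$; the upgrade from $\operatorname{domdim}\Lambda\geq1$ to $\geq2$ in the cotilting case via $\Ext^{1}_{\Lambda}(\Omega_{\Lambda}^{-1}\Lambda,C)=0$ and $\text{Cogen}\,C=\{X:\Ext^{1}_{\Lambda}(X,C)=0\}$; and the uniqueness step, where rigidity of $T\oplus T_{\mathcal{C}}$ together with the standard count (a rigid module of projective dimension at most one has at most $n$ pairwise non-isomorphic indecomposable summands, with equality exactly for tilting modules) forces $\add T=\add T_{\mathcal{C}}$ (concluding $T\cong T_{\mathcal{C}}$ also uses the usual convention that tilting modules are basic).

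The one genuine gap is the word ``dually'' together with the clause ``preserving $\mathcal{C}$ and the dominant dimension.'' The duality $D$ turns the minimal injective copresentation of $\Lambda_{\Lambda}$ into the minimal projective presentation of the \emph{left} module ${}_{\Lambda}D\Lambda$; so $\operatorname{domdim}\Lambda\geq2$ on the right controls projective presentations of injective left modules, not right ones. Consequently your dual construction in $(a)\Rightarrow(c)$ --- which needs the terms $P_{0},P_{1}$ in a minimal projective presentation of each injective right module $I_{i}$ to be projective-injective, so that $\Omega_{\Lambda}I_{i}\in\mathcal{C}_{\Lambda}$ and $\id_{\Lambda}\Omega_{\Lambda}I_{i}\leq1$ --- and your reduction of $(b)\Rightarrow(a)$ to the cotilting case over $\Lambda^{\mathrm{op}}$ both require $\operatorname{domdim}\Lambda^{\mathrm{op}}=\operatorname{domdim}\Lambda$. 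That is exactly the left-right symmetry of dominant dimension, a true but genuinely nontrivial classical theorem of M\"{u}ller (1968); it does not follow formally from applying $D$, and as written your text presents it as if it did. You must invoke it explicitly (compare the paper's remark after Definition \ref{def4} that Iyama--Solberg's Auslander--Gorenstein condition is left-right symmetric, which rests on the same fact). With that citation inserted at the two places where you dualize, your proof is complete.
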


Recently, Iyama and Solberg defined $m$-Auslander-Gorenstein algebras in $\cite{OS}$.  They also showed that the notion of $m$-Auslander-Gorenstein algebra is left and right symmetric.
\begin{mydef}
\label{def4}
An algebra $\Lambda$ is $m$-$\emph{Auslander-Gorenstein}$ if
\begin{center}
$\id_{\Lambda}\Lambda_{\Lambda}\leq m+1\leq\mathop{\text{domdim}}\Lambda$.
\end{center}
\end{mydef}
In particular, a $1$-Auslander-Gorenstein algebra $\Lambda$ is either a selfinjective algebra or a Goresntein algebra satisfying $\id_{\Lambda}\Lambda_{\Lambda}=\mathop{\text{domdim}}\Lambda=2$.  Nguyen, Reiten, Todorov, and Zhu provide a characterization of $1$-Auslander-Gorenstein algebras in terms of the existence of a tilting-cotilting module in $\mathcal{C}_{\Lambda}$.
\begin{theorem}$\emph{\cite[Theorem~2.4.12]{NRTZ}}$
Let $\Lambda$ be an algebra.  Then the subcategory $\mathcal{C}_{\Lambda}$ contains a tilting-cotilting module if and only if $\Lambda$ is a $1$-Auslander-Gorenstein algebra.
\end{theorem}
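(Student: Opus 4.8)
The plan is to reduce the statement to Theorem~\ref{NRTZ Main} together with the standard numerical criterion for (co)tilting modules of projective (resp.\ injective) dimension at most $1$. The selfinjective case is immediate and I would dispatch it first: there every indecomposable projective is projective-injective, so $\tilde{Q}$ is a projective generator and injective cogenerator, $\mathcal{C}_\Lambda=\mathop{\text{mod}}\Lambda$, and $\tilde{Q}$ is itself a tilting-cotilting module; on the other side $\mathop{\text{domdim}}\Lambda=\infty$ and $\id_\Lambda\Lambda_\Lambda=0$, so $\Lambda$ is $1$-Auslander-Gorenstein. Assume henceforth that $\Lambda$ is not selfinjective. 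The key reduction is that a tilting-cotilting module is in particular a tilting module in $\mathcal{C}_\Lambda$, so by part~(1) of Theorem~\ref{NRTZ Main} its existence forces $\mathop{\text{domdim}}\Lambda\geq2$, and by part~(2) any such module must be isomorphic to $T_{\mathcal{C}}=\tilde{Q}\oplus(\bigoplus_i\Omega_\Lambda^{-1}P_i)$. Thus the whole statement reduces to: \emph{given} $\mathop{\text{domdim}}\Lambda\geq2$, when is this particular $T_{\mathcal{C}}$ also cotilting? Here I would record that $T_{\mathcal{C}}$ is a classical tilting module, i.e.\ $\pd_\Lambda T_{\mathcal{C}}\leq1$: for each projective non-injective $P_i$ the cosyzygy fits into a short exact sequence $0\to P_i\to I^0_i\to\Omega_\Lambda^{-1}P_i\to0$ with $I^0_i$ projective-injective, which is a projective resolution of length $1$.

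For the forward direction, assume $T_{\mathcal{C}}$ is tilting-cotilting. Being cotilting gives $\id_\Lambda T_{\mathcal{C}}\leq1$, hence $\id_\Lambda\Omega_\Lambda^{-1}P_i\leq1$ for every $i$. I would then splice the sequence $0\to P_i\to I^0_i\to\Omega_\Lambda^{-1}P_i\to0$ (with $I^0_i$ injective) together with a length-$\leq1$ injective coresolution of $\Omega_\Lambda^{-1}P_i$ to produce an injective resolution of $P_i$ of length at most $2$, so $\id_\Lambda P_i\leq2$. Since $\Lambda$ decomposes as the sum of the projective-injective summands of $\tilde{Q}$ (of injective dimension $0$) and the various $P_i$, this yields $\id_\Lambda\Lambda_\Lambda\leq2$. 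Combined with $\mathop{\text{domdim}}\Lambda\geq2$, this is precisely the defining inequality $\id_\Lambda\Lambda_\Lambda\leq2\leq\mathop{\text{domdim}}\Lambda$ of a $1$-Auslander-Gorenstein algebra (Definition~\ref{def4} with $m=1$).

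For the converse, assume $\Lambda$ is $1$-Auslander-Gorenstein, so $\mathop{\text{domdim}}\Lambda\geq2$ and $\id_\Lambda\Lambda_\Lambda\leq2$. By Theorem~\ref{NRTZ Main} the module $T_{\mathcal{C}}$ exists and is tilting, and it remains to verify it is cotilting. From $\id_\Lambda\Lambda_\Lambda\leq2$ each $P_i$ has $\id_\Lambda P_i\leq2$, and reading the same sequence $0\to P_i\to I^0_i\to\Omega_\Lambda^{-1}P_i\to0$ off in the other direction gives $\id_\Lambda\Omega_\Lambda^{-1}P_i\leq1$; with $\id_\Lambda\tilde{Q}=0$ this yields $\id_\Lambda T_{\mathcal{C}}\leq1$. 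Now I would invoke the dual of the standard numerical criterion: a $\Lambda$-module $T$ with $\id_\Lambda T\leq1$, with $\Ext^1_\Lambda(T,T)=0$, and with exactly as many pairwise non-isomorphic indecomposable summands as there are isomorphism classes of simple $\Lambda$-modules, is a cotilting module. The module $T_{\mathcal{C}}$ satisfies the last two conditions because it is a tilting module, so it is cotilting, hence tilting-cotilting.

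The main obstacle is exactly the gap between Theorem~\ref{NRTZ Main}(1) and this statement: under $\mathop{\text{domdim}}\Lambda\geq2$ one obtains a tilting module \emph{and}, separately, a cotilting module, but not automatically a single module that is both. The extra bound $\id_\Lambda\Lambda_\Lambda\leq2$ is what forces the canonical tilting module $T_{\mathcal{C}}$ to acquire injective dimension $\leq1$ and thereby become cotilting. The technical crux is therefore the interchange $\id_\Lambda P_i\leq2\Leftrightarrow\id_\Lambda\Omega_\Lambda^{-1}P_i\leq1$ extracted from the cosyzygy sequence, and the use of the numerical cotilting criterion to upgrade ``tilting'' to ``cotilting'' in the converse.
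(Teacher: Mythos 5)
This statement is one the paper imports verbatim from the literature (it is \cite[Theorem~2.4.12]{NRTZ}) and the paper gives no proof of it, so there is no internal argument to compare against; what can be judged is whether your reconstruction is sound, and it is. Your reduction is the right one: by Theorem~\ref{NRTZ Main}(1) the existence of any tilting module in $\mathcal{C}_{\Lambda}$ already forces $\mathop{\textrm{domdim}}\Lambda\geq2$, by (2) the only candidate is $T_{\mathcal{C}}=\tilde{Q}\oplus(\bigoplus_i\Omega_{\Lambda}^{-1}P_i)$, and the whole content becomes the equivalence ``$T_{\mathcal{C}}$ cotilting $\Leftrightarrow$ $\id_{\Lambda}\Lambda_{\Lambda}\leq2$'' under $\mathop{\textrm{domdim}}\Lambda\geq2$. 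The two technical steps both check out: the dimension shift $\id_{\Lambda}P_i\leq2\Leftrightarrow\id_{\Lambda}\Omega_{\Lambda}^{-1}P_i\leq1$ is valid for any exact sequence $0\to P_i\to I_i^0\to\Omega_{\Lambda}^{-1}P_i\to0$ with $I_i^0$ injective (minimality is not needed), and the upgrade from ``partial cotilting'' to ``cotilting'' via the count of indecomposable summands is the dual of the Bongartz-type criterion \cite[VI.4.4]{ASS}, applicable because $T_{\mathcal{C}}$, being tilting, has exactly as many pairwise non-isomorphic indecomposable summands as there are simples and satisfies $\Ext^1_{\Lambda}(T_{\mathcal{C}},T_{\mathcal{C}})=0$. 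The observation that $\pd_{\Lambda}T_{\mathcal{C}}\leq1$ uses that the injective envelope of each $P_i$ is projective, which indeed follows from $\mathop{\textrm{domdim}}\Lambda\geq2$ (even $\geq1$ suffices), and the selfinjective case is handled correctly (both sides of the equivalence hold trivially there). So your proposal is a complete and correct proof of the quoted theorem, derived from the same ingredient (\cite[Corollary~2.4.2]{NRTZ}) that the paper also quotes; it is essentially a reconstruction of the argument the original source gives, not a deviation from anything in this paper.
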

The following statement generalizes Crawley-Boevey and Sauter's result from $\cite{CBS}$.
\begin{prop}$\emph{\cite[Corollary~2.4.13]{NRTZ}}$
\label{general}
Let $\Lambda$ be an algebra with $\mathop{\emph{gl.dim}}\Lambda<\infty$.  Then the subcategory $\mathcal{C}_{\Lambda}$ contains a tilting-cotilting module if and only if $\Lambda$ is an Auslander algebra.
\end{prop}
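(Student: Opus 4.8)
The plan is to reduce the statement to a purely homological comparison by feeding it into the characterization already recorded just above. By \cite[Theorem~2.4.12]{NRTZ}, the subcategory $\mathcal{C}_{\Lambda}$ contains a tilting-cotilting module if and only if $\Lambda$ is $1$-Auslander-Gorenstein, that is, if and only if $\id_{\Lambda}\Lambda_{\Lambda}\leq2\leq\mathop{\text{domdim}}\Lambda$. On the other hand, $\Lambda$ is an Auslander algebra precisely when $\mathop{\text{gl.dim}}\Lambda\leq2$ and $\mathop{\text{domdim}}\Lambda\geq2$. Since the condition $\mathop{\text{domdim}}\Lambda\geq2$ is common to both descriptions, under the standing hypothesis $\mathop{\text{gl.dim}}\Lambda<\infty$ it suffices to prove the equivalence $\id_{\Lambda}\Lambda_{\Lambda}\leq2 \iff \mathop{\text{gl.dim}}\Lambda\leq2$.

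This equivalence will follow at once from the identity $\id_{\Lambda}\Lambda_{\Lambda}=\mathop{\text{gl.dim}}\Lambda$, valid whenever the global dimension is finite. One inequality is free: $\id_{\Lambda}\Lambda_{\Lambda}\leq\mathop{\text{gl.dim}}\Lambda$ always holds because the global dimension is the supremum of the injective dimensions of all modules. This already settles the ``if'' direction: when $\Lambda$ is Auslander we get $\id_{\Lambda}\Lambda_{\Lambda}\leq\mathop{\text{gl.dim}}\Lambda\leq2$ together with $\mathop{\text{domdim}}\Lambda\geq2$, so $\Lambda$ is $1$-Auslander-Gorenstein and \cite[Theorem~2.4.12]{NRTZ} produces the tilting-cotilting module.

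The substantive content, and the main obstacle, is the reverse inequality $\mathop{\text{gl.dim}}\Lambda\leq\id_{\Lambda}\Lambda_{\Lambda}$, which is exactly where finiteness of the global dimension is indispensable (it fails, for instance, for a non-semisimple selfinjective algebra, where $\id_{\Lambda}\Lambda_{\Lambda}=0$ but $\mathop{\text{gl.dim}}\Lambda=\infty$). I would establish it as follows. Set $m:=\id_{\Lambda}\Lambda_{\Lambda}$. Every projective $\Lambda$-module lies in $\add\Lambda_{\Lambda}$ and hence has injective dimension at most $m$. Now let $M$ be an arbitrary $\Lambda$-module; since $\mathop{\text{gl.dim}}\Lambda<\infty$, $M$ admits a finite projective resolution, which I would splice into short exact sequences and then treat by induction on $\pd_{\Lambda}M$, using the standard inequality $\id_{\Lambda}C\leq\max\{\id_{\Lambda}B,\id_{\Lambda}A-1\}$ attached to a short exact sequence $0\to A\to B\to C\to0$. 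Applying this to $0\to\Omega_{\Lambda}M\to P_0\to M\to0$ with $P_0$ projective and $\id_{\Lambda}\Omega_{\Lambda}M\leq m$ known by induction yields $\id_{\Lambda}M\leq m$. As $M$ was arbitrary, $\mathop{\text{gl.dim}}\Lambda=\sup_{M}\id_{\Lambda}M\leq m=\id_{\Lambda}\Lambda_{\Lambda}$, giving the desired identity.

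With the identity in hand the ``only if'' direction closes quickly: if $\mathcal{C}_{\Lambda}$ contains a tilting-cotilting module then $\Lambda$ is $1$-Auslander-Gorenstein, so $\id_{\Lambda}\Lambda_{\Lambda}\leq2$ and $\mathop{\text{domdim}}\Lambda\geq2$; the finiteness hypothesis upgrades the first inequality to $\mathop{\text{gl.dim}}\Lambda=\id_{\Lambda}\Lambda_{\Lambda}\leq2$, whence $\Lambda$ is an Auslander algebra. The only delicate point is the homological identity of the previous paragraph; everything else is bookkeeping with the two definitions, and the result indeed recovers Lemma~\ref{CBS} upon specializing to $\mathop{\text{gl.dim}}\Lambda=2$.
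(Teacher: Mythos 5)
Your argument is correct, but be aware that there is no proof in the paper to compare it against: the paper imports this proposition verbatim as \cite[Corollary~2.4.13]{NRTZ}, exactly as it imports the equivalence with $1$-Auslander-Gorensteinness (\cite[Theorem~2.4.12]{NRTZ}) quoted just above it. What you have written is in effect a reconstruction of the source's own derivation, and it holds up. Your reduction is the natural one: by the quoted Theorem~2.4.12, existence of a tilting-cotilting module in $\mathcal{C}_{\Lambda}$ is equivalent to $\id_{\Lambda}\Lambda_{\Lambda}\leq2\leq\mathop{\text{domdim}}\Lambda$ (Definition~\ref{def4}), while being an Auslander algebra means $\mathop{\text{gl.dim}}\Lambda\leq2\leq\mathop{\text{domdim}}\Lambda$, so the whole statement rests on the identity $\id_{\Lambda}\Lambda_{\Lambda}=\mathop{\text{gl.dim}}\Lambda$ under the standing hypothesis $\mathop{\text{gl.dim}}\Lambda<\infty$. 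Your proof of that identity is sound: the inequality $\id_{\Lambda}\Lambda_{\Lambda}\leq\mathop{\text{gl.dim}}\Lambda$ is immediate because the global dimension is also the supremum of injective dimensions, and the converse follows by your induction on $\pd_{\Lambda}M$ applied to $0\rightarrow\Omega_{\Lambda}M\rightarrow P_0\rightarrow M\rightarrow0$, using the bound $\id_{\Lambda}M\leq\max\{\id_{\Lambda}P_0,\,\id_{\Lambda}\Omega_{\Lambda}M-1\}$, which is correct as stated (it comes from the segment $\Ext^n_{\Lambda}(X,P_0)\rightarrow\Ext^n_{\Lambda}(X,M)\rightarrow\Ext^{n+1}_{\Lambda}(X,\Omega_{\Lambda}M)$ of the long exact sequence). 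Your parenthetical remark that finiteness of global dimension is indispensable is also well taken: a non-semisimple selfinjective algebra has $\id_{\Lambda}\Lambda_{\Lambda}=0$ and $\mathop{\text{domdim}}\Lambda=\infty$, hence is $1$-Auslander-Gorenstein, yet it is not an Auslander algebra since its global dimension is infinite; this is precisely the gap the hypothesis closes, and it is why the paper needs the finite-global-dimension version (this proposition) in the proof of Proposition~\ref{easy2} rather than Theorem~2.4.12 alone. In short: the proposal is correct, self-contained modulo \cite[Theorem~2.4.12]{NRTZ} (which the paper also quotes without proof), and follows what is essentially the intended route of the cited source, with the one nontrivial homological ingredient proved rather than asserted.
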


We will need the following lemma.
\begin{lemma}$\emph{\cite[Lemma~1.1.4]{NRTZ}}$
\label{small}
Let $\Lambda$ be an algebra.  Let $X\in\mathcal{C}_{\Lambda}$.  Let $Y$ be a $\Lambda$-module with $\pd_{\Lambda}Y=1$.  Then $\emph{Ext}_{\Lambda}^1(Y,X)=0$.
\end{lemma}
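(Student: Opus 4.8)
The plan is to reduce the statement to a short dimension-shifting argument based on a generating presentation of $X$. Since $X \in \mathcal{C}_{\Lambda} \subseteq \mathop{\text{Gen}}\tilde{Q}$, by Definition $\ref{def1}$ there is an epimorphism $\tilde{Q}^{m} \to X$ for some integer $m \geq 0$. Writing $K$ for its kernel, I would start from the short exact sequence
\[
0 \to K \to \tilde{Q}^{m} \to X \to 0 .
\]

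Next I would apply $\Hom_{\Lambda}(Y,-)$ and isolate the relevant three terms of the induced long exact sequence of extension groups,
\[
\Ext_{\Lambda}^{1}(Y,\tilde{Q}^{m}) \to \Ext_{\Lambda}^{1}(Y,X) \to \Ext_{\Lambda}^{2}(Y,K),
\]
the idea being to show that the two outer groups vanish so that the middle one is forced to vanish as well. For the left-hand term, the point is that $\tilde{Q}$ is projective-\emph{injective} by construction, so the finite direct sum $\tilde{Q}^{m}$ is injective and hence $\Ext_{\Lambda}^{1}(Y,\tilde{Q}^{m})=0$. For the right-hand term, I would use the hypothesis $\pd_{\Lambda}Y=1$ (in fact $\pd_{\Lambda}Y \leq 1$ is all that is needed): a module of projective dimension at most one has $\Ext_{\Lambda}^{i}(Y,-)=0$ for every $i \geq 2$, so in particular $\Ext_{\Lambda}^{2}(Y,K)=0$.

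Exactness of the displayed sequence then immediately gives $\Ext_{\Lambda}^{1}(Y,X)=0$. The only step requiring any thought is the very first one, namely choosing to resolve $X$ using the epimorphism coming from generation by $\tilde{Q}$ rather than, say, an injective copresentation or an ordinary projective cover; it is precisely this choice that lets injectivity of $\tilde{Q}^{m}$ annihilate the $\Ext^{1}$ on the left while the projective-dimension bound on $Y$ annihilates the $\Ext^{2}$ on the right. I therefore expect this to be the main (and essentially the only) obstacle, with the remainder being a routine two-sided squeeze. It is worth noting that only the containment $X \in \mathop{\text{Gen}}\tilde{Q}$ enters the argument, so the cogeneration half of membership in $\mathcal{C}_{\Lambda}$ is not actually used here.
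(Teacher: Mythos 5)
Your proof is correct: the three displayed groups are consecutive terms of the long exact sequence obtained by applying $\Hom_{\Lambda}(Y,-)$ to $0\rightarrow K\rightarrow \tilde{Q}^{m}\rightarrow X\rightarrow 0$, the left one vanishes because $\tilde{Q}^{m}$ is injective (each summand of $\tilde{Q}$ is projective-injective), and the right one vanishes because $\pd_{\Lambda}Y\leq 1$, so exactness squeezes $\Ext_{\Lambda}^{1}(Y,X)=0$. Note that the paper itself offers no proof of this statement (it is quoted from \cite{NRTZ}), so there is nothing internal to compare against; your dimension-shifting argument is the standard proof of this fact, and your closing observations are accurate refinements of the statement --- only the generation half $X\in\Gen\tilde{Q}$ of membership in $\mathcal{C}_{\Lambda}$ is used, and the hypothesis $\pd_{\Lambda}Y=1$ can be relaxed to $\pd_{\Lambda}Y\leq 1$.
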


\subsection{Tilting and Cotilting Modules}
  We begin with the definition of tilting and cotilting modules.
   \begin{mydef}
   \label{Tilting}
    Let $\Lambda$ be an algebra.  A $\Lambda$-module $T$ is a $\emph{partial tilting module}$ if the following two conditions are satisfied: 
   \begin{enumerate}
   \item[($\text{1}$)] $\pd_{\Lambda}T\leq1$.
   \item[($\text{2}$)] $\Ext_{\Lambda}^1(T,T)=0$.
   \end{enumerate}
   A partial tilting module $T$ is called a $\emph{tilting module}$ if it also satisfies:
   \begin{enumerate}
   \item[($\text{3}$)] There exists a short exact sequence $0\rightarrow \Lambda\rightarrow T'\rightarrow T''\rightarrow 0$ in $\mathop{\text{mod}}\Lambda$ with $T'$ and $T''$ $\in \add T$.
   \end{enumerate}
   A $\Lambda$-module $C$ is a $\emph{partial cotilting module}$ if the following two conditions are satisfied: 
   \begin{enumerate}
   \item[($\text{1}'$)] $\id_{\Lambda}C\leq1$.
   \item[($\text{2}'$)] $\Ext_{\Lambda}^1(C,C)=0$.
   \end{enumerate}   
   A partial cotilting module is called a $\emph{cotilting module}$ if it also satisfies:
   \begin{enumerate}
   \item[($\text{3}'$)] There exists a short exact sequence $0\rightarrow C'\rightarrow C''\rightarrow D\Lambda\rightarrow 0$ in $\mathop{\text{mod}}\Lambda$ with $C'$ and $C''$ $\in \add C$.   
  \end{enumerate} 
   \end{mydef}
 
 Tilting modules and cotilting modules induce torsion pairs in a natural way.  We consider the restriction to a subcategory $\mathcal{C}$ of a functor $F$ defined originally on a module category, and we denote it by $F|_{\mathcal{C}}$.   
 \begin{mydef} 
 \label{def5}
 A pair of full subcategories $(\mathcal{T},\mathcal{F})$ of $\mathop{\text{mod}}\Lambda$ is called a $\emph{torsion pair}$ if the following conditions are satisfied:
   \begin{enumerate}
   \item[(1)] $\text{Hom}_{\Lambda}(M,N)=0$ for all $M\in\mathcal{T}$, $N\in\mathcal{F}.$
   \item[(2)] $\text{Hom}_{\Lambda}(M,-)|_\mathcal{F}=0$ implies $M\in\mathcal{T}.$
   \item[(3)] $\text{Hom}_{\Lambda}(-,N)|_\mathcal{T}=0$ implies $N\in\mathcal{F}.$
   \end{enumerate}
   \end{mydef}
   Consider the following full subcategories of $\mathop{\text{mod}}\Lambda$ where $T$ is a tilting $\Lambda$-module.
 \[
 \mathcal{T}(T)=\{M\in\mathop{\text{mod}}\Lambda~|~ \text{Ext}_{\Lambda}^{1}(T,M)=0\}
 \]
 \[
 \mathcal{F}(T)=\{M\in\mathop{\text{mod}}\Lambda~|~\text{Hom}_{\Lambda}(T,M)=0\}
 \]
 Then $(\text{Gen}T,\mathcal{F}(T))=(\mathcal{T}(T),\text{Cogen}(\tau_{\Lambda} T))$ is a torsion pair in $\mathop{\text{mod}}\Lambda$. 
 Consider the following full subcategories of $\mathop{\text{mod}}\Lambda$ where $C$ is a cotilting $\Lambda$-module. 
  \[
 \mathcal{T}(C)=\{M\in\mathop{\text{mod}}\Lambda~|~ \text{Ext}_{\Lambda}^{1}(M,C)=0\}
 \]
 \[
 \mathcal{F}(C)=\{M\in\mathop{\text{mod}}\Lambda~|~\text{Hom}_{\Lambda}(M,C)=0\}
 \] 
 Then $(\text{Gen}(\tau_{\Lambda}^{-1}C),\mathcal{T}(C))=(\mathcal{F}(C),\text{Cogen}(C))$ is a torsion pair in $\mathop{\text{mod}}\Lambda$.  We refer the reader to $\cite{ASS}$ for more details.

 \par
 \begin{mydef}
Let $\mathcal{T}$ be a full subcategory of $\mathop{\text{mod}}\Lambda$.  We say a $\Lambda$-module $X\in\mathcal{T}$ is $\text{Ext}$-$\it{projective}$ if $\text{Ext}_{\Lambda}^1(X,-)|_{\mathcal{T}}=0$.
\end{mydef} 
The following proposition characterizes Ext-projectives in torsion classes.
\begin{prop}$\emph{\cite[VI.1,~Proposition~1.11]{ASS}}$
 \label{Ext}
 Let $(\mathcal{T},\mathcal{F})$ be a torsion pair in $\mathop{\emph{mod}}\Lambda$ and $X\in\mathcal{T}$ be an indecomposable $\Lambda$-module.  Then $X$ is $\emph{Ext}$-projective in $\mathcal{T}$ if and only if $\tau_{\Lambda}X\in\mathcal{F}$.
 \end{prop}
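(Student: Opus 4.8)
The plan is to derive everything from the almost split sequence ending at $X$ together with the defining properties of the torsion pair, rather than invoking the Auslander--Reiten formula directly. First I would dispose of the trivial case: if $X$ is projective then $\tau_{\Lambda}X=0$, which lies in $\mathcal{F}$, and $X$ is automatically $\Ext$-projective, so both sides hold. Thus assume $X$ is indecomposable non-projective and let $\eta\colon 0\to\tau_{\Lambda}X\xrightarrow{u}E\xrightarrow{v}X\to0$ be the almost split sequence. I will also use two standard consequences of the torsion pair axioms $(1)$--$(3)$: that $\mathcal{F}=\{N\mid\Hom_{\Lambda}(M,N)=0\text{ for all }M\in\mathcal{T}\}$, and that every module $N$ has a largest submodule $tN\in\mathcal{T}$ with $N/tN\in\mathcal{F}$. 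The recurring mechanism is the correspondence between morphisms of short exact sequences and pushouts: a commutative ladder with identity on the quotient term and a map $\phi$ on the kernels exhibits the lower class as $\phi_{*}$ of the upper one.

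For the implication $\tau_{\Lambda}X\in\mathcal{F}\Rightarrow X$ is $\Ext$-projective, I would take any $M\in\mathcal{T}$ and a short exact sequence $\xi\colon 0\to M\xrightarrow{a}Y\xrightarrow{b}X\to0$ and show it splits. If it did not split, $b$ would not be a retraction, so by the right almost split property of $v$ it factors as $b=vc$ for some $c\colon Y\to E$; since $vca=ba=0$, the map $ca$ lands in $\operatorname{im}u$, producing $\phi\colon M\to\tau_{\Lambda}X$ with $u\phi=ca$ and hence a morphism of short exact sequences from $\xi$ down to $\eta$, so that $\eta=\phi_{*}\xi$. But $M\in\mathcal{T}$ and $\tau_{\Lambda}X\in\mathcal{F}$ force $\Hom_{\Lambda}(M,\tau_{\Lambda}X)=0$, whence $\phi=0$ and $\eta=0$, contradicting that the almost split sequence is non-split. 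Therefore $\Ext_{\Lambda}^1(X,M)=0$ for every $M\in\mathcal{T}$.

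For the converse I would argue contrapositively from the canonical decomposition. Suppose $X$ is $\Ext$-projective but $\tau_{\Lambda}X\notin\mathcal{F}$, so that $t\tau_{\Lambda}X\neq0$ and the projection $p\colon\tau_{\Lambda}X\to\tau_{\Lambda}X/t\tau_{\Lambda}X$ has nonzero kernel; in particular $p$ is not a split monomorphism. Since $u$ is left almost split, $p$ factors through $u$, which makes the pushout class $p_{*}\eta$ split, i.e. $p_{*}\eta=0$ in $\Ext_{\Lambda}^1(X,\tau_{\Lambda}X/t\tau_{\Lambda}X)$. Applying $\Hom_{\Lambda}(X,-)$ to the canonical sequence $0\to t\tau_{\Lambda}X\to\tau_{\Lambda}X\xrightarrow{p}\tau_{\Lambda}X/t\tau_{\Lambda}X\to0$ yields an exact sequence in which $\eta\in\ker(p_{*})$, so $\eta$ lifts to a nonzero class in $\Ext_{\Lambda}^1(X,t\tau_{\Lambda}X)$. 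As $t\tau_{\Lambda}X\in\mathcal{T}$, this contradicts the $\Ext$-projectivity of $X$; hence $t\tau_{\Lambda}X=0$ and $\tau_{\Lambda}X\in\mathcal{F}$.

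The step I expect to require the most care is verifying the two interactions with the almost split sequence: that $p_{*}\eta=0$ is equivalent to $p$ factoring through $u$ (the pushout-splitting criterion, read off from the long exact sequence obtained by applying $\Hom_{\Lambda}(-,M)$ to $\eta$), and that a non-retraction onto $X$ must factor through $v$. I deliberately avoid the Auslander--Reiten isomorphism $\Ext_{\Lambda}^1(X,M)\cong D\overline{\Hom}_{\Lambda}(M,\tau_{\Lambda}X)$: although it instantly reduces the problem to comparing $\overline{\Hom}$ with $\Hom$, the quotient by morphisms factoring through injectives is exactly what obstructs passing from $\overline{\Hom}_{\Lambda}(M,\tau_{\Lambda}X)=0$ to $\Hom_{\Lambda}(M,\tau_{\Lambda}X)=0$, so the pushout argument above is cleaner. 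Finally, I would double-check that the torsion-radical decomposition and the description of $\mathcal{F}$ used above genuinely follow from axioms $(1)$--$(3)$, so that no extra hypothesis on the torsion pair is smuggled in.
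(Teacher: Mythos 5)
Your proof is correct, but note that the paper itself contains no proof of this statement: it is imported verbatim from $\cite[VI.1,~Proposition~1.11]{ASS}$, so the only meaningful comparison is with the textbook argument. There, the direction ``$\tau_{\Lambda}X\in\mathcal{F}$ implies $X$ is Ext-projective'' is dispatched by the Auslander--Reiten formula $\Ext_{\Lambda}^1(X,M)\cong D\overline{\Hom}_{\Lambda}(M,\tau_{\Lambda}X)$, which vanishes at once because $\overline{\Hom}_{\Lambda}(M,\tau_{\Lambda}X)$ is a quotient of $\Hom_{\Lambda}(M,\tau_{\Lambda}X)=0$; your substitute --- factoring the epimorphism of a hypothetical non-split extension through the right almost split map $v$, extracting $\phi\colon M\to\tau_{\Lambda}X$ with $\eta=\phi_{*}\xi$, and killing $\phi$ by axiom (1) --- is more elementary, and it does sidestep the $\overline{\Hom}$-versus-$\Hom$ subtlety, which (contrary to your stated worry) is only a genuine obstruction in the converse direction, exactly where the torsion radical enters. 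Your converse --- pushing out the almost split sequence along $p\colon\tau_{\Lambda}X\to\tau_{\Lambda}X/t\tau_{\Lambda}X$, noting that $p$ is not a split monomorphism so it factors through the left almost split map $u$, hence $p_{*}\eta=0$, and then combining the long exact sequence with $\Ext_{\Lambda}^1(X,t\tau_{\Lambda}X)=0$ to contradict $\eta\neq0$ --- is essentially the standard textbook argument. The auxiliary facts you flag at the end (existence of the torsion radical $t$ from axioms (1)--(3), the description of $\mathcal{F}$ as the right $\Hom$-orthogonal of $\mathcal{T}$, and the equivalence of $p_{*}\eta=0$ with $p$ factoring through $u$ via the connecting homomorphism) are all standard and correctly deployed, so there is no gap.
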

 We need the following characterization of tilting modules.
 \begin{theorem}$\emph{\cite[VI.2,~Theorem~2.5]{ASS}}$ 
 \label{Tilting}
 Let $T$ be a partial tilting $\Lambda$-module.  Then $T$ is a tilting $\Lambda$-module if and only if, for every $X\in\mathop{\emph{mod}}\Lambda$, $X\in\add T$ if and only if $X$ is $\emph{Ext}$-projective in $\mathcal{T}(T)$.
 \end{theorem}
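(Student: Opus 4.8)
The plan is to prove the biconditional by establishing the two implications separately, relying on two inputs recorded earlier: the fact that for a tilting module $T$ the induced torsion pair has the form $(\Gen T,\mathcal{F}(T))=(\mathcal{T}(T),\operatorname{Cogen}(\tau_{\Lambda}T))$, so in particular $\Gen T=\mathcal{T}(T)$, together with Bongartz's lemma on completing a partial tilting module to a tilting module. Throughout write $\mathcal{P}$ for the class of $\operatorname{Ext}$-projective modules in $\mathcal{T}(T)$. Since both $\operatorname{Ext}$-projectivity and membership in the torsion class $\mathcal{T}(T)$ pass to direct summands, it suffices to compare the indecomposables lying in $\add T$ with those lying in $\mathcal{P}$.

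First I would prove $\add T=\mathcal{P}$ assuming $T$ is tilting. The inclusion $\add T\subseteq\mathcal{P}$ is immediate: $\Ext_{\Lambda}^1(T,T)=0$ gives $\add T\subseteq\mathcal{T}(T)$, and for $X\in\add T$ and $M\in\mathcal{T}(T)$ the group $\Ext_{\Lambda}^1(X,M)$ is a direct summand of a power of $\Ext_{\Lambda}^1(T,M)=0$. For the reverse inclusion let $X$ be indecomposable in $\mathcal{P}$. Since $T$ is tilting, $X\in\mathcal{T}(T)=\Gen T$, so a right $\add T$-approximation $g\colon T_0\to X$ is surjective (surjectivity because $X\in\Gen T$); set $K=\ker g$ and consider $0\to K\to T_0\xrightarrow{g}X\to0$. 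Applying $\Hom_{\Lambda}(T,-)$, the surjectivity of $\Hom_{\Lambda}(T,g)$ (the approximation property) together with $\Ext_{\Lambda}^1(T,T_0)=0$ forces $\Ext_{\Lambda}^1(T,K)=0$ in the long exact sequence, i.e. $K\in\mathcal{T}(T)$. Then applying $\Hom_{\Lambda}(X,-)$ and using $\Ext_{\Lambda}^1(X,K)=0$ ($\operatorname{Ext}$-projectivity of $X$), the map $\Hom_{\Lambda}(X,g)$ is surjective, so $g$ admits a section and $X$ is a direct summand of $T_0\in\add T$. Hence $X\in\add T$, giving $\mathcal{P}\subseteq\add T$.

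For the converse, assume $\add T=\mathcal{P}$. As $T$ is partial tilting, Bongartz's lemma yields a module $E$ with $T\oplus E$ tilting, fitting into a short exact sequence $0\to\Lambda\to E\to T^{d}\to0$. Applying $\Hom_{\Lambda}(-,M)$ to this sequence and using that $\Lambda$ is projective shows $\Ext_{\Lambda}^1(E,M)=0$ whenever $\Ext_{\Lambda}^1(T,M)=0$; hence $\mathcal{T}(T)\subseteq\mathcal{T}(E)$ and therefore $\mathcal{T}(T\oplus E)=\mathcal{T}(T)\cap\mathcal{T}(E)=\mathcal{T}(T)$. Now I apply the already-established forward implication to the tilting module $T\oplus E$: its indecomposable summands are exactly the indecomposable $\operatorname{Ext}$-projectives of $\mathcal{T}(T\oplus E)=\mathcal{T}(T)$, which by hypothesis are precisely the indecomposable summands of $T$. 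Thus $E\in\add T$, so $\add(T\oplus E)=\add T$; since $T\oplus E$ admits a defining short exact sequence $0\to\Lambda\to T'\to T''\to0$ with $T',T''\in\add(T\oplus E)=\add T$, the module $T$ satisfies condition $(3)$ in the definition of a tilting module and is therefore tilting.

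The main obstacle is the converse direction, and specifically the invocation of Bongartz's lemma: one needs not only that a partial tilting module completes to a tilting module, but that the completion preserves the torsion class, $\mathcal{T}(T\oplus E)=\mathcal{T}(T)$, which is exactly what lets the forward implication be transported back from $T\oplus E$ to $T$. In the forward direction the delicate point is taking $g$ to be an $\add T$-approximation rather than an arbitrary epimorphism, since surjectivity of $\Hom_{\Lambda}(T,g)$ is precisely what returns the kernel $K$ to $\mathcal{T}(T)$ and then allows $X$ to split off. Proposition~\ref{Ext} furnishes an alternative identification of the indecomposable $\operatorname{Ext}$-projectives through the condition $\tau_{\Lambda}X\in\mathcal{F}(T)$, which could be used to cross-check that the summands of $T$ account for all of $\mathcal{P}$.
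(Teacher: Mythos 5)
Your proposal is correct, and it is essentially the standard proof of this result from the cited source \cite[VI.2, Theorem 2.5]{ASS}, which the paper itself quotes without proof: the necessity direction via a surjective right $\add T$-approximation whose kernel lands back in $\mathcal{T}(T)$ so that Ext-projectivity splits the sequence, and the sufficiency direction via Bongartz's completion $T\oplus E$ together with the key observation $\mathcal{T}(T\oplus E)=\mathcal{T}(T)$, forcing $E\in\add T$. No gaps to flag.
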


 We say a torsion pair $(\mathcal{T},\mathcal{F})$
 is $\it{splitting}$ if every indecomposable $\Lambda$-module belongs to either $\mathcal{T}$ or $\mathcal{F}$.  Let $\Lambda$ have $\mathop{\text{domdim}}\Lambda\geq2$ and $(\mathcal{T}(T_{\mathcal{C}}),\mathcal{F}(T_{\mathcal{C}}))$ be the torsion pair induced by $T_{\mathcal{C}}$.
\begin{prop}$\emph{\cite[Proposition~2.2]{Z}}$.
\label{prop2}
The torsion pair $(\mathcal{T}(T_{\mathcal{C}}),\mathcal{F}(T_{\mathcal{C}}))$ is splitting if and only if $\pd_{\Lambda}X\leq1$ for every $X\in\mathcal{F}(T_{\mathcal{C}})$.
\end{prop}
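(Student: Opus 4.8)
The plan is to reformulate the splitting condition as an $\Ext$-vanishing statement and then attack each direction homologically. Write $T=T_{\mathcal{C}}$, $\mathcal{T}=\mathcal{T}(T_{\mathcal{C}})$ and $\mathcal{F}=\mathcal{F}(T_{\mathcal{C}})$. First I would record the elementary torsion-pair observation that $(\mathcal{T},\mathcal{F})$ is splitting if and only if $\Ext_{\Lambda}^1(\mathcal{F},\mathcal{T})=0$. Indeed, given splitting, any extension of $F\in\mathcal{F}$ by $T'\in\mathcal{T}$ has a middle term whose indecomposable summands each lie in $\mathcal{T}$ or in $\mathcal{F}$; since $\Hom_{\Lambda}(\mathcal{T},\mathcal{F})=0$, the summands lying in $\mathcal{T}$ must be carried isomorphically onto $T'$, forcing the sequence to split. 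Conversely, if $\Ext_{\Lambda}^1(\mathcal{F},\mathcal{T})=0$ then for each indecomposable $M$ the canonical sequence $0\to tM\to M\to M/tM\to 0$ splits, so $M\in\mathcal{T}$ or $M\in\mathcal{F}$. I would also record two facts used throughout: every injective module lies in $\mathcal{T}=\{N:\Ext_{\Lambda}^1(T,N)=0\}$, because $\Ext_{\Lambda}^1(T,I)=0$ for every injective $I$; and the standard Auslander--Reiten criterion that for an indecomposable non-projective $X$ one has $\pd_{\Lambda}X\le 1$ if and only if $\Hom_{\Lambda}(D\Lambda,\tau_{\Lambda}X)=0$.

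For the direction ``$\pd_{\Lambda}X\le 1$ for all $X\in\mathcal{F}$ implies splitting,'' I would verify $\Ext_{\Lambda}^1(\mathcal{F},\mathcal{T})=0$ directly. Fix $X\in\mathcal{F}$ and $M\in\mathcal{T}=\Gen T$, choose an epimorphism $T^d\to M$ with kernel $K$, and apply $\Hom_{\Lambda}(X,-)$ to $0\to K\to T^d\to M\to 0$. Since $\pd_{\Lambda}X\le 1$ we have $\Ext_{\Lambda}^2(X,K)=0$, so $\Ext_{\Lambda}^1(X,T^d)\to\Ext_{\Lambda}^1(X,M)$ is surjective; but $T\in\mathcal{C}_{\Lambda}$ and $\pd_{\Lambda}X\le 1$, so Lemma~\ref{small} gives $\Ext_{\Lambda}^1(X,T)=0$, whence $\Ext_{\Lambda}^1(X,M)=0$ (the case $\pd_{\Lambda}X=0$ being trivial). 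This yields $\Ext_{\Lambda}^1(\mathcal{F},\mathcal{T})=0$, hence splitting.

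For the converse, the key step is to promote splitting into information about $\tau_{\Lambda}$. Let $X\in\mathcal{F}$ be indecomposable; if $X$ is projective we are done, so assume not and take its Auslander--Reiten sequence $0\to\tau_{\Lambda}X\to E\to X\to 0$, which is non-split and thus represents a nonzero class in $\Ext_{\Lambda}^1(X,\tau_{\Lambda}X)$. Since the pair is splitting and $\tau_{\Lambda}X$ is indecomposable, either $\tau_{\Lambda}X\in\mathcal{T}$ or $\tau_{\Lambda}X\in\mathcal{F}$; the former is impossible, as it would place that nonzero class inside $\Ext_{\Lambda}^1(\mathcal{F},\mathcal{T})=0$. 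Hence $\tau_{\Lambda}X\in\mathcal{F}$. Because every injective lies in $\mathcal{T}$ and $\Hom_{\Lambda}(\mathcal{T},\mathcal{F})=0$, we get $\Hom_{\Lambda}(D\Lambda,\tau_{\Lambda}X)=0$, and the Auslander--Reiten criterion then gives $\pd_{\Lambda}X\le 1$.

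I expect the main obstacle to be precisely this last implication: converting the purely categorical splitting hypothesis into the homological bound $\pd_{\Lambda}X\le 1$. The device that makes it work is feeding the non-split Auslander--Reiten sequence through the reformulation $\Ext_{\Lambda}^1(\mathcal{F},\mathcal{T})=0$ to force $\tau_{\Lambda}X$ into the torsion-free class, after which the location of the injectives in $\mathcal{T}$ together with the formula $\pd_{\Lambda}X\le 1\iff\Hom_{\Lambda}(D\Lambda,\tau_{\Lambda}X)=0$ finishes the argument. I would take care with the edge cases (projective or injective $X$) and lean on the given identifications $\mathcal{T}(T_{\mathcal{C}})=\Gen T_{\mathcal{C}}$ and $\mathcal{F}(T_{\mathcal{C}})=\text{Cogen}(\tau_{\Lambda}T_{\mathcal{C}})$ to be sure no indecomposable is overlooked.
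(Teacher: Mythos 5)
Your proof is correct, but there is nothing in the paper to compare it against line by line: the paper does not prove this proposition, it imports it wholesale as a citation of \cite[Proposition~2.2]{Z}. Judged on its own, your argument is a sound, self-contained derivation, and its structure is worth commenting on. The reduction of splitting to $\Ext_{\Lambda}^1(\mathcal{F},\mathcal{T})=0$ is standard (your ``carried isomorphically onto $T'$'' step is stated loosely; the crisp version is that in $0\to T'\to E\to F\to 0$ the torsion submodule $tE$ contains the image of $T'$ and maps to zero in $F$, so $tE=T'$, and splitting of the canonical sequence of $E$ then splits the given one). Your necessity direction --- feeding the Auslander--Reiten sequence of an indecomposable non-projective $X\in\mathcal{F}(T_{\mathcal{C}})$ through the $\Ext$-vanishing to force $\tau_{\Lambda}X\in\mathcal{F}(T_{\mathcal{C}})$, then using that injectives lie in $\mathcal{T}(T_{\mathcal{C}})$ together with the criterion $\pd_{\Lambda}X\leq1\iff\Hom_{\Lambda}(D\Lambda,\tau_{\Lambda}X)=0$ --- is in fact valid for the torsion pair of \emph{any} tilting module over \emph{any} algebra. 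The sufficiency direction is where the hypotheses specific to this paper genuinely enter: for a general tilting module, $\pd_{\Lambda}X\leq1$ on the torsion-free class does not obviously yield $\Ext_{\Lambda}^1(X,T)=0$, and your appeal to Lemma~\ref{small} (legitimate precisely because $T_{\mathcal{C}}\in\mathcal{C}_{\Lambda}$) is exactly the right ingredient, consistent with how the paper itself deploys that lemma in Proposition~\ref{easy1}. Two cosmetic points: make the reduction to indecomposable $X$ explicit (harmless, since $\mathcal{F}(T_{\mathcal{C}})$ is closed under summands and projective dimension is the maximum over summands), and note that the identification $\mathcal{T}(T_{\mathcal{C}})=\Gen T_{\mathcal{C}}$ you use is the one recorded in the paper after Definition~\ref{def5}.
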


 \subsection{Tilted Algebras}
 We now state the definition of a tilted algebra.
 \begin{mydef} Let $A$ be a hereditary algebra with $T$ a tilting $A$-module.  Then the algebra $\Lambda=\End_AT$ is called a $\emph{tilted algebra}$.
 \end{mydef}
The following lemma provides a useful criterion for a $\Lambda$-module to have projective or injective dimension at most $1$ in a tilted algebra.
\begin{lemma}$\emph{\cite[Lemma~3.1]{ZITO}}$ 
 \label{Homological Result}
 Let $\Lambda$ be a tilted algebra and $M$ a $\Lambda$-module.  
 \begin{enumerate}
 \item[$\emph{(1)}$] $\pd_{\Lambda}M\leq1$ if and only if $\emph{Hom}_{\Lambda}(\tau_{\Lambda}^{-1}\Omega_{\Lambda}^{-1}\Lambda,M)=0.$
 \item[$\emph{(2)}$] $\id_{\Lambda}M\leq1$ if and only if $\emph{Hom}_{\Lambda}(M,\tau_{\Lambda}\Omega_{\Lambda} D\Lambda)=0.$
 \end{enumerate}
 \end{lemma}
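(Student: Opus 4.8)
The plan is to prove statement (1) directly and to obtain (2) by the mirror argument (equivalently, by applying (1) to the opposite algebra $\Lambda^{op}$ via the duality $D$, using $\id_\Lambda M=\pd_{\Lambda^{op}}DM$). The only feature of tilted algebras I expect to use is that $\mathop{\mathrm{gl.dim}}\Lambda\le 2$; everything else will be Auslander--Reiten duality together with dimension shifting. So throughout I would freely use $\pd_\Lambda X\le 2$ and $\id_\Lambda X\le 2$ for every module $X$.

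For (1), the first step is the reduction $\pd_\Lambda M\le 1\iff\Ext^2_\Lambda(M,\Lambda)=0$. Since $\mathop{\mathrm{gl.dim}}\Lambda\le 2$ we have $\pd_\Lambda\Omega_\Lambda M\le 1$, and for a module of projective dimension at most $1$ one gets $\Ext^1_\Lambda(\Omega_\Lambda M,\Lambda)\cong\operatorname{Tr}\Omega_\Lambda M$ by dualizing its minimal projective resolution; as $\operatorname{Tr}N=0$ exactly when $N$ is projective, the dimension-shift isomorphism $\Ext^2_\Lambda(M,\Lambda)\cong\Ext^1_\Lambda(\Omega_\Lambda M,\Lambda)$ yields the claimed equivalence. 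Next I would feed $\Lambda$ into the short exact sequence $0\to\Lambda\to I_0\to\Omega^{-1}_\Lambda\Lambda\to 0$ arising from its injective envelope; applying $\Hom_\Lambda(M,-)$ and using that $I_0$ is injective gives $\Ext^2_\Lambda(M,\Lambda)\cong\Ext^1_\Lambda(M,\Omega^{-1}_\Lambda\Lambda)$. Finally the Auslander--Reiten formula gives
\[
\Ext^1_\Lambda(M,\Omega^{-1}_\Lambda\Lambda)\cong D\,\underline{\Hom}_\Lambda(\tau^{-1}_\Lambda\Omega^{-1}_\Lambda\Lambda,\,M),
\]
so that $\pd_\Lambda M\le 1$ is equivalent to the vanishing of the \emph{stable} Hom $\underline{\Hom}_\Lambda(\tau^{-1}_\Lambda\Omega^{-1}_\Lambda\Lambda,M)$.

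The subtle point, which I expect to be the real content, is that the statement asks for the honest $\Hom$, whereas Auslander--Reiten duality only ever produces $\underline{\Hom}$. To close this gap I would show that every homomorphism $\tau^{-1}_\Lambda\Omega^{-1}_\Lambda\Lambda\to M$ factoring through a projective is already zero; since such a map factors through some $\Lambda^n$, it suffices to prove $\Hom_\Lambda(\tau^{-1}_\Lambda\Omega^{-1}_\Lambda\Lambda,\Lambda)=0$. For this I would use the dual of the clean identity $\Hom_\Lambda(D\Lambda,\tau_\Lambda W)\cong\Omega^2_\Lambda W$ (again obtained by dualizing a minimal projective presentation), namely $\Hom_\Lambda(\tau^{-1}_\Lambda Z,\Lambda)\cong D\,\Omega^2_{\Lambda^{op}}(DZ)$, which vanishes precisely when $\id_\Lambda Z\le 1$. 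Taking $Z=\Omega^{-1}_\Lambda\Lambda$ and observing that $\id_\Lambda\Omega^{-1}_\Lambda\Lambda\le 1$ (from $\id_\Lambda\Lambda\le\mathop{\mathrm{gl.dim}}\Lambda\le 2$ via the same injective resolution) gives the required vanishing. Hence $\underline{\Hom}_\Lambda(\tau^{-1}_\Lambda\Omega^{-1}_\Lambda\Lambda,M)=\Hom_\Lambda(\tau^{-1}_\Lambda\Omega^{-1}_\Lambda\Lambda,M)$ for every $M$, and (1) follows.

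For (2) I would run the mirror argument: reduce $\id_\Lambda M\le 1$ to $\Ext^2_\Lambda(D\Lambda,M)=0$, dimension-shift along the projective cover $0\to\Omega_\Lambda D\Lambda\to P_0\to D\Lambda\to 0$ to reach $\Ext^1_\Lambda(\Omega_\Lambda D\Lambda,M)$, and apply the companion Auslander--Reiten formula $\Ext^1_\Lambda(\Omega_\Lambda D\Lambda,M)\cong D\,\overline{\Hom}_\Lambda(M,\tau_\Lambda\Omega_\Lambda D\Lambda)$. The passage from $\overline{\Hom}$ to $\Hom$ is then controlled by $\Hom_\Lambda(D\Lambda,\tau_\Lambda\Omega_\Lambda D\Lambda)\cong\Omega^2_\Lambda(\Omega_\Lambda D\Lambda)=0$, which holds because $\pd_\Lambda\Omega_\Lambda D\Lambda\le 1$. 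Alternatively, and more cheaply, (2) is just (1) read in $\mathop{\mathrm{mod}}\Lambda^{op}$ under $D$, so no genuinely new work is needed once (1) is in hand. The only real obstacle throughout is the $\underline{\Hom}$-versus-$\Hom$ (resp. $\overline{\Hom}$-versus-$\Hom$) discrepancy; all the remaining steps are routine bookkeeping with dimension shifts.
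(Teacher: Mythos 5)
Your proof is correct, but note that there is no internal proof in this paper to compare it against: the lemma is imported verbatim from \cite[Lemma~3.1]{ZITO}, with only the remark that it holds for any algebra of global dimension two. Your argument is a legitimate, self-contained proof in exactly that generality, since the only property of tilted algebras you use is $\mathop{\text{gl.dim}}\Lambda\leq2$. The chain $\pd_{\Lambda}M\leq1\Leftrightarrow\Ext^2_{\Lambda}(M,\Lambda)=0\Leftrightarrow\Ext^1_{\Lambda}(M,\Omega^{-1}_{\Lambda}\Lambda)=0\Leftrightarrow\underline{\Hom}_{\Lambda}(\tau^{-1}_{\Lambda}\Omega^{-1}_{\Lambda}\Lambda,M)=0$ is sound (the first equivalence genuinely needs $\mathop{\text{gl.dim}}\Lambda\leq2$, which you supply through $\Ext^1_{\Lambda}(\Omega_{\Lambda}M,\Lambda)\cong\operatorname{Tr}\Omega_{\Lambda}M$ and the fact that $\operatorname{Tr}$ vanishes exactly on projectives), and you correctly isolate the one nontrivial issue, the discrepancy between $\underline{\Hom}$ and $\Hom$, closing it by proving $\Hom_{\Lambda}(\tau^{-1}_{\Lambda}\Omega^{-1}_{\Lambda}\Lambda,\Lambda)=0$ from $\id_{\Lambda}\Omega^{-1}_{\Lambda}\Lambda\leq1$. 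Two small remarks. First, the isomorphism should read $\Hom_{\Lambda}(\tau^{-1}_{\Lambda}Z,\Lambda)\cong\Omega^2_{\Lambda^{op}}(DZ)$ (as left $\Lambda$-modules), not its $k$-dual; since you only use the vanishing statement, nothing breaks, but the $D$ is spurious. Second, the two auxiliary identities you re-derive --- $\Hom_{\Lambda}(D\Lambda,\tau_{\Lambda}W)=0$ if and only if $\pd_{\Lambda}W\leq1$, and $\Hom_{\Lambda}(\tau^{-1}_{\Lambda}Z,\Lambda)=0$ if and only if $\id_{\Lambda}Z\leq1$ --- are precisely \cite[IV.2,~Lemma~2.7]{ASS}, so citing that lemma would let you collapse both the initial reduction and the stable-versus-ordinary $\Hom$ step into a few lines; this is presumably close to how the original proof in \cite{ZITO} proceeds. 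Finally, your observation that (2) is just (1) for $\Lambda^{op}$ under the duality $D$ is valid, since tilted algebras are closed under passing to the opposite algebra, so the second half of your writeup is indeed optional.
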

We note the above result was proved in the more general context of algebras of $\mathop{\text{gl.dim}}\Lambda=2$.
The next lemma gives homological dimension information on $\tau_{\Lambda}^{-1}\Omega_{\Lambda}^{-1}\Lambda$ and $\tau_{\Lambda}\Omega_{\Lambda} D\Lambda$.
\begin{lemma}
\label{something}
Let $\Lambda$ be a tilted algebra.
\begin{enumerate}
\item[$\emph{(1)}$] If $X\in\add(\tau_{\Lambda}^{-1}\Omega_{\Lambda}^{-1}\Lambda)$, then $\id_{\Lambda}X\leq1$.
\item[$\emph{(2)}$] If $X\in\add(\tau_{\Lambda}\Omega_{\Lambda} D\Lambda)$, then $\pd_{\Lambda}X\leq1$.
\end{enumerate}
\end{lemma}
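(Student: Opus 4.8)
The plan is to collapse both statements onto a single $\Hom$-vanishing and then to read that vanishing off the connecting torsion pair of the tilted algebra.

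I would begin with the reductions furnished by Lemma~\ref{Homological Result}. If $X\in\add(\tau_{\Lambda}^{-1}\Omega_{\Lambda}^{-1}\Lambda)$, then by part (2) we have $\id_{\Lambda}X\le 1$ exactly when $\Hom_{\Lambda}(X,\tau_{\Lambda}\Omega_{\Lambda}D\Lambda)=0$; and since $X$ is a summand of a finite direct sum of copies of $\tau_{\Lambda}^{-1}\Omega_{\Lambda}^{-1}\Lambda$, this group is a summand of a direct sum of copies of $\Hom_{\Lambda}(\tau_{\Lambda}^{-1}\Omega_{\Lambda}^{-1}\Lambda,\tau_{\Lambda}\Omega_{\Lambda}D\Lambda)$. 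Dually, if $X\in\add(\tau_{\Lambda}\Omega_{\Lambda}D\Lambda)$, then part (1) gives $\pd_{\Lambda}X\le 1$ exactly when $\Hom_{\Lambda}(\tau_{\Lambda}^{-1}\Omega_{\Lambda}^{-1}\Lambda,X)=0$, which reduces to the same group. Hence both statements follow once I show
\[
\Hom_{\Lambda}(\tau_{\Lambda}^{-1}\Omega_{\Lambda}^{-1}\Lambda,\ \tau_{\Lambda}\Omega_{\Lambda}D\Lambda)=0 .
\]

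Next I would invoke the structure of $\Lambda=\End_{A}T$ with $A$ hereditary and let $(\mathcal{X}(T),\mathcal{Y}(T))$ be the induced connecting torsion pair in $\mathop{\text{mod}}\Lambda$, with $\mathcal{X}(T)$ the torsion class and $\mathcal{Y}(T)$ the torsion-free class. The torsion-pair axiom gives $\Hom_{\Lambda}(\mathcal{X}(T),\mathcal{Y}(T))=0$, so the displayed vanishing follows from the two memberships
\[
\tau_{\Lambda}^{-1}\Omega_{\Lambda}^{-1}\Lambda\in\mathcal{X}(T),\qquad \tau_{\Lambda}\Omega_{\Lambda}D\Lambda\in\mathcal{Y}(T).
\]
These two claims are interchanged by the duality $D$ together with the passage to $\Lambda^{op}$, which is again tilted and carries $\tau_{\Lambda}^{-1}\Omega_{\Lambda}^{-1}\Lambda$ to the analogue of $\tau_{\Lambda}\Omega_{\Lambda}D\Lambda$; so it suffices to establish the second membership. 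For that I would start from the clean observation that $\Omega_{\Lambda}D\Lambda$ is a submodule of a projective cover of $D\Lambda$, and that every projective lies in the torsion-free class $\mathcal{Y}(T)$, which is closed under submodules; hence $\Omega_{\Lambda}D\Lambda\in\mathcal{Y}(T)$ (in particular $\pd_{\Lambda}\Omega_{\Lambda}D\Lambda\le1$, as one also sees from $\mathop{\text{gl.dim}}\Lambda\le2$). It then remains to pass from $\Omega_{\Lambda}D\Lambda$ to $\tau_{\Lambda}\Omega_{\Lambda}D\Lambda$ without leaving $\mathcal{Y}(T)$, i.e. to show that $\tau_{\Lambda}$ carries the relevant summands of $\mathcal{Y}(T)$ back into $\mathcal{Y}(T)$. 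This is where the fine structure is used: relative to the complete slice $\Sigma$ sitting in the connecting component, $\mathcal{Y}(T)$ consists of $\Sigma$ together with its predecessors, and $\tau_{\Lambda}$ moves a module further into the predecessor region, so $\tau_{\Lambda}$ preserves $\mathcal{Y}(T)$ up to projective summands by the separating property of $\Sigma$ (\cite{ASS}).

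I expect this last step — the $\tau_{\Lambda}$-stability of $\mathcal{Y}(T)$ on the summands in question, with correct treatment of any summands in non-connecting (in particular regular) components through which a $\tau$-orbit might wander — to be the main obstacle, as it is the only point that genuinely uses the geometry of the connecting component rather than formal torsion theory. I would also note that the Auslander--Reiten formula offers no shortcut here: it only recasts the target as $D\Ext_{\Lambda}^{2}(D\Lambda,\tau_{\Lambda}^{-1}\Omega_{\Lambda}^{-1}\Lambda)$, whose vanishing is equivalent to statement (1) itself and therefore supplies no independent leverage.
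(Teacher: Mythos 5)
Your reduction of both parts to the single vanishing $\Hom_{\Lambda}(\tau_{\Lambda}^{-1}\Omega_{\Lambda}^{-1}\Lambda,\tau_{\Lambda}\Omega_{\Lambda}D\Lambda)=0$ is correct (by Lemma~$\ref{Homological Result}$ that vanishing is indeed equivalent to each of (1) and (2)), and your argument that $\tau_{\Lambda}\Omega_{\Lambda}D\Lambda$ lies in the connecting torsion-free class $\mathcal{Y}(T)$ is sound: projectives lie in $\mathcal{Y}(T)$, torsion-free classes are closed under submodules, and since $(\mathcal{X}(T),\mathcal{Y}(T))$ is splitting with $\Hom_{\Lambda}(\mathcal{X}(T),\mathcal{Y}(T))=0$, the class $\mathcal{Y}(T)$ is closed under predecessors, in particular under $\tau_{\Lambda}$ --- so the step you flag as the main obstacle is actually fine. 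The genuine gap is the step you treat as routine: the duality transfer. The duality $D$ does \emph{not} interchange the connecting torsion pairs of $\Lambda$ and $\Lambda^{op}$; it carries $(\mathcal{X}(T),\mathcal{Y}(T))$ to the torsion pair induced in $\bmod\Lambda^{op}$ by the tilting left module $_{\Lambda}T$ (equivalently by the cotilting module $DT$), which is a different pair from the connecting pair of $\Lambda^{op}=\End_{A^{op}}(DT)$. Concretely, let $\Lambda=kQ/\mathrm{rad}^2$ with $Q\colon 1\to 2\to 3$ (a tilted algebra of type $A_3$). Here $\mathcal{X}(T_{\Lambda})$ consists of the single indecomposable $S_1$, while the connecting torsion-free class of $\Lambda^{op}$ contains four of the five indecomposable $\Lambda^{op}$-modules; since $D$ is a bijection on indecomposables, the image under $D$ of the latter cannot lie in the former --- indeed that image contains the projectives $P_1,P_2$, which lie in $\mathcal{Y}(T_{\Lambda})$ and hence not in $\mathcal{X}(T_{\Lambda})$. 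So ``$\tau_{\Lambda^{op}}\Omega_{\Lambda^{op}}D(\Lambda^{op})$ lies in the connecting torsion-free class of $\Lambda^{op}$'' does not yield $\tau_{\Lambda}^{-1}\Omega_{\Lambda}^{-1}\Lambda\in\mathcal{X}(T_{\Lambda})$, and your proof of the Hom-vanishing is incomplete.

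The missing membership is in fact true, but the efficient way to prove it also makes the torsion-pair detour unnecessary, and this is essentially what the paper does. The same structure theory that gives splitting also gives $\pd_{\Lambda}Y\leq1$ for all $Y\in\mathcal{Y}(T)$ and $\id_{\Lambda}X\leq1$ for all $X\in\mathcal{X}(T)$; combined with splitting, this is exactly the well-known dichotomy that every indecomposable module over a tilted algebra has projective or injective dimension at most $1$. Granting the dichotomy, the paper's proof is immediate: if $\mathop{\text{gl.dim}}\Lambda\leq1$ there is nothing to prove, and if $\mathop{\text{gl.dim}}\Lambda=2$, then any nonzero indecomposable $X\in\add(\tau_{\Lambda}^{-1}\Omega_{\Lambda}^{-1}\Lambda)$ receives a nonzero map from $\tau_{\Lambda}^{-1}\Omega_{\Lambda}^{-1}\Lambda$ (a split projection), so Lemma~$\ref{Homological Result}$~(1) forces $\pd_{\Lambda}X=2$, and the dichotomy then gives $\id_{\Lambda}X\leq1$; part (2) is dual. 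If you prefer to keep your architecture, the repair is to replace the duality step by exactly this computation: every indecomposable summand $U$ of $\tau_{\Lambda}^{-1}\Omega_{\Lambda}^{-1}\Lambda$ has $\pd_{\Lambda}U=2$, hence $U\notin\mathcal{Y}(T)$ because $\pd_{\Lambda}\leq1$ there, hence $U\in\mathcal{X}(T)$ by splitting.
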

\begin{proof}
We prove (1) with the proof of (2) similar.  If $\mathop{\text{gl.dim}}\Lambda\leq1$, then we are done.  If $\mathop{\text{gl.dim}}\Lambda=2$, Lemma $\ref{Homological Result}$ (1) implies $\pd_{\Lambda}X=2$ for every $X\in\add(\tau_{\Lambda}^{-1}\Omega_{\Lambda}^{-1}\Lambda)$.  It is well known $\pd_{\Lambda}X\leq1$ or $\id_{\Lambda}X\leq1$ for every $X\in\mathop{\text{ind}}\Lambda$ when $\Lambda$ is a tilted algebra.  Thus, we must have $\id_{\Lambda}X\leq1$ for every indecomposable 
$X\in\add(\tau_{\Lambda}^{-1}\Omega_{\Lambda}^{-1}\Lambda)$ and our result follows.
\end{proof}
The following result was shown in $\cite{Z}$ and gives a necessary and sufficient condition for an Auslander algebra to be tilted.  We note an algebra of global dimension less then or equal to $1$ is hereditary and thus tilted. 
\begin{theorem}$\emph{\cite[Theorem~2.3]{Z}}$
\label{MainZ}
Let $\Lambda$ be an Auslander algebra of $\mathop{\emph{gl.dim}}\Lambda=2$.  Then $\Lambda$ is tilted if and only if $\pd_{\Lambda}(\tau_{\Lambda}\Omega_{\Lambda}D\Lambda)\leq1$.
\end{theorem}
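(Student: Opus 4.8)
The plan is to prove the two implications separately, with essentially all the work in the converse. The forward direction is immediate: if $\Lambda$ is tilted then $\tau_\Lambda\Omega_\Lambda D\Lambda\in\add(\tau_\Lambda\Omega_\Lambda D\Lambda)$, so Lemma~\ref{something}(2) gives $\pd_\Lambda(\tau_\Lambda\Omega_\Lambda D\Lambda)\le1$ at once. For the converse I would verify the characterization of tilted algebras via a sincere module (the criterion of Jaworska--Malicki--Skowro\'nski recalled in the introduction), taking the sincere module to be the tilting--cotilting module $T_{\mathcal{C}}$.

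First I would reduce the hypothesis to a splitting statement. Since $\operatorname{gl.dim}\Lambda=2$ and $\Lambda$ is Auslander, Proposition~\ref{general} supplies a tilting--cotilting module $T_{\mathcal{C}}\in\mathcal{C}_{\Lambda}$, which is sincere as a tilting module. From the cotilting description $T_{\mathcal{C}}\cong\tilde{Q}\oplus(\bigoplus_i\Omega_\Lambda I_i)$ of Theorem~\ref{NRTZ Main}(3), together with $\tau_\Lambda\tilde{Q}=0=\Omega_\Lambda\tilde{Q}$ (as $\tilde{Q}$ is projective) and $D\Lambda\cong\tilde{Q}\oplus(\bigoplus_i I_i)$, I obtain $\add(\tau_\Lambda T_{\mathcal{C}})=\add(\tau_\Lambda\Omega_\Lambda D\Lambda)$. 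Because $\pd_\Lambda T_{\mathcal{C}}\le1$ and $\operatorname{gl.dim}\Lambda=2$, the class $\mathcal{P}^1(\Lambda)$ is closed under submodules (a standard syzygy argument using $\Ext^3_\Lambda=0$), so $\operatorname{Cogen}(\tau_\Lambda T_{\mathcal{C}})\subseteq\mathcal{P}^1(\Lambda)$ if and only if $\pd_\Lambda(\tau_\Lambda T_{\mathcal{C}})\le1$. Since $\mathcal{F}(T_{\mathcal{C}})=\operatorname{Cogen}(\tau_\Lambda T_{\mathcal{C}})$, Proposition~\ref{prop2} shows that the hypothesis $\pd_\Lambda(\tau_\Lambda\Omega_\Lambda D\Lambda)\le1$ is equivalent to the torsion pair $(\mathcal{T}(T_{\mathcal{C}}),\mathcal{F}(T_{\mathcal{C}}))$ being splitting.

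Assuming splitting, I would check the criterion for $T_{\mathcal{C}}$: for each indecomposable $X$, either $\Hom_\Lambda(T_{\mathcal{C}},\tau_\Lambda X)=0$ or $\Hom_\Lambda(X,T_{\mathcal{C}})=0$. Splitting gives $\Ext^1_\Lambda(\mathcal{F}(T_{\mathcal{C}}),\mathcal{T}(T_{\mathcal{C}}))=0$. If $X\in\mathcal{F}(T_{\mathcal{C}})$ is non-projective, applying this vanishing to the almost split sequence $0\to\tau_\Lambda X\to E\to X\to0$ (a non-split extension of $X$ by $\tau_\Lambda X$) forces $\tau_\Lambda X\notin\mathcal{T}(T_{\mathcal{C}})$, hence $\tau_\Lambda X\in\mathcal{F}(T_{\mathcal{C}})$ and $\Hom_\Lambda(T_{\mathcal{C}},\tau_\Lambda X)=0$; the projective case is trivial. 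If instead $X\in\mathcal{T}(T_{\mathcal{C}})=\operatorname{Gen}T_{\mathcal{C}}$ is non-projective and $\Hom_\Lambda(T_{\mathcal{C}},\tau_\Lambda X)\neq0$, then $\tau_\Lambda X\in\operatorname{Gen}T_{\mathcal{C}}$ by splitting, so $\Ext^1_\Lambda(T_{\mathcal{C}},\tau_\Lambda X)=0$, and the Auslander--Reiten formula gives $\underline{\Hom}_\Lambda(X,T_{\mathcal{C}})=0$, i.e.\ every map $X\to T_{\mathcal{C}}$ factors through a projective. To finish I would use the identity $\operatorname{Gen}T_{\mathcal{C}}\cap\operatorname{Cogen}T_{\mathcal{C}}=\add T_{\mathcal{C}}$: any $N$ in the intersection embeds in a power of $T_{\mathcal{C}}$, hence $\pd_\Lambda N\le1$, so $\Ext^1_\Lambda(N,T_{\mathcal{C}})=0$ (Lemma~\ref{small}) propagates to $\Ext^1_\Lambda(N,\operatorname{Gen}T_{\mathcal{C}})=0$ and Theorem~\ref{Tilting} yields $N\in\add T_{\mathcal{C}}$. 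Thus the image of any nonzero map $X\to T_{\mathcal{C}}$ lies in $\add T_{\mathcal{C}}$, so $X$ surjects onto an indecomposable summand of $T_{\mathcal{C}}$. Once $\Hom_\Lambda(X,T_{\mathcal{C}})=0$ is established in this case, the criterion holds and $\Lambda$ is tilted.

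The main obstacle is precisely this last case, $X\in\operatorname{Gen}T_{\mathcal{C}}$. The Auslander--Reiten formula together with Lemma~\ref{small} yields only vanishing of the costable Hom, and this must be upgraded to genuine vanishing of $\Hom_\Lambda(X,T_{\mathcal{C}})$. Reconciling the two facts just obtained --- that every map $X\to T_{\mathcal{C}}$ factors through a projective, yet a nonzero such map would exhibit $X$ as surjecting onto an indecomposable summand $\Omega_\Lambda^{-1}P_i$ of $T_{\mathcal{C}}$ --- is where the projective--injective summand $\tilde{Q}$ and the cotilting property $\id_\Lambda T_{\mathcal{C}}\le1$ must be exploited, using that such a summand receives an epimorphism from a projective--injective so that the relevant maps are forced to factor through $\tilde{Q}$. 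This interplay between the two torsion structures carried by $T_{\mathcal{C}}$ is the delicate point on which the converse turns.
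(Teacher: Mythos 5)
Your forward direction is correct (it is immediate from Lemma~\ref{something}(2)), your reduction of the hypothesis $\pd_{\Lambda}(\tau_{\Lambda}\Omega_{\Lambda}D\Lambda)\leq1$ to the splitting of $(\mathcal{T}(T_{\mathcal{C}}),\mathcal{F}(T_{\mathcal{C}}))$ via Proposition~\ref{prop2} is sound, and the case $X\in\mathcal{F}(T_{\mathcal{C}})$ of the Jaworska--Malicki--Skowro\'{n}ski criterion is handled correctly. (Note that the paper itself does not prove Theorem~\ref{MainZ}; it quotes it from \cite{Z}, whose strategy --- the JMS criterion applied to $T_{\mathcal{C}}$ --- is exactly the one you adopt.) The problem is the case you yourself flag as ``the delicate point'': it is not a finishing touch but a genuine gap. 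When $X\in\mathcal{T}(T_{\mathcal{C}})$ and $\Hom_{\Lambda}(T_{\mathcal{C}},\tau_{\Lambda}X)\neq0$, all you extract from $\Ext^1_{\Lambda}(T_{\mathcal{C}},\tau_{\Lambda}X)=0$ is vanishing of the stable Hom $\underline{\Hom}_{\Lambda}(X,T_{\mathcal{C}})$, and this genuinely does not suffice: every map from $X$ into the projective--injective summand $\tilde{Q}$ of $T_{\mathcal{C}}$ factors through a projective, so stable vanishing says nothing whatsoever about such maps. Your proposed remedy (images landing in $\add T_{\mathcal{C}}$, maps ``forced to factor through $\tilde{Q}$'') is a heuristic, not an argument, and I do not see how to complete it along those lines.

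The missing idea --- which is precisely how this paper resolves the identical difficulty in the proof of its own Theorem~2.1 --- is to combine the \emph{refined} Auslander--Reiten formulas with Lemma~\ref{Homological Result}(2), which (as the paper remarks) is valid for any algebra of $\mathop{\text{gl.dim}}\Lambda=2$, so it may be used before knowing $\Lambda$ is tilted. Since $\pd_{\Lambda}T_{\mathcal{C}}\leq1$, the AR formula holds with ordinary Hom: $\Ext^1_{\Lambda}(T_{\mathcal{C}},\tau_{\Lambda}X)\cong D\Hom_{\Lambda}(\tau_{\Lambda}X,\tau_{\Lambda}T_{\mathcal{C}})$, so the Ext-vanishing gives $\Hom_{\Lambda}(\tau_{\Lambda}X,\tau_{\Lambda}T_{\mathcal{C}})=0$ on the nose. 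As $\add(\tau_{\Lambda}T_{\mathcal{C}})=\add(\tau_{\Lambda}\Omega_{\Lambda}D\Lambda)$ by Theorem~\ref{NRTZ Main}(3), Lemma~\ref{Homological Result}(2) then yields $\id_{\Lambda}(\tau_{\Lambda}X)\leq1$. Once the injective dimension of $\tau_{\Lambda}X$ is at most $1$, the other refined AR formula also holds with ordinary Hom: $\Ext^1_{\Lambda}(T_{\mathcal{C}},\tau_{\Lambda}X)\cong D\Hom_{\Lambda}(\tau_{\Lambda}^{-1}\tau_{\Lambda}X,T_{\mathcal{C}})\cong D\Hom_{\Lambda}(X,T_{\mathcal{C}})$, whence $\Hom_{\Lambda}(X,T_{\mathcal{C}})=0$ --- the honest vanishing you could not reach, covering in particular the maps into $\tilde{Q}$. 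This two-step upgrade (stable vanishing $\Rightarrow$ $\id_{\Lambda}\tau_{\Lambda}X\leq1$ $\Rightarrow$ ordinary vanishing) is the content your proposal is missing; with it inserted, the rest of your outline goes through.
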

Our final result in this section shows the induced torsion pair, $(\mathcal{T}(T_{\mathcal{C}}),\mathcal{F}(T_{\mathcal{C}}))$, will always split when $\Lambda$ is an Auslander and tilted algebra. 
\begin{prop}
\label{splits}
Let $\Lambda$ be an Auslander and tilted algebra.  Let $(\mathcal{T}(T_{\mathcal{C}}),\mathcal{F}(T_{\mathcal{C}}))$ be the torsion pair induced by $T_{\mathcal{C}}$.  Then $(\mathcal{T}(T_{\mathcal{C}}),\mathcal{F}(T_{\mathcal{C}}))$ is splitting.
\end{prop}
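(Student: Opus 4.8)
The plan is to reduce to a projective-dimension statement via Proposition \ref{prop2} and then exploit Theorem \ref{MainZ} together with the self-dual way $T_{\mathcal{C}}$ sits inside $\mathop{\text{mod}}\Lambda$. By Proposition \ref{prop2}, the torsion pair $(\mathcal{T}(T_{\mathcal{C}}),\mathcal{F}(T_{\mathcal{C}}))$ is splitting if and only if $\pd_{\Lambda}X\leq1$ for every $X\in\mathcal{F}(T_{\mathcal{C}})$, so I would establish the latter. If $\mathop{\text{gl.dim}}\Lambda\leq1$ then $\Lambda$ is hereditary and every module has projective dimension at most $1$, so the conclusion is immediate; hence I assume $\mathop{\text{gl.dim}}\Lambda=2$, which is the setting of Theorem \ref{MainZ}.

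First I would rewrite the torsion-free class in a convenient form. Since $\mathcal{F}(T_{\mathcal{C}})=\text{Cogen}(\tau_{\Lambda}T_{\mathcal{C}})$, every $X\in\mathcal{F}(T_{\mathcal{C}})$ admits a monomorphism $X\hookrightarrow(\tau_{\Lambda}T_{\mathcal{C}})^d$ for some $d\geq0$. The key structural observation is that $T_{\mathcal{C}}$ is tilting-cotilting, so by the cotilting description in Theorem \ref{NRTZ Main}(3) it is isomorphic, as a module, to $\tilde{Q}\oplus\Omega_{\Lambda}D\Lambda$, the projective-injective injectives contributing $\Omega_{\Lambda}I=0$. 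Because $\tilde{Q}$ is projective, $\tau_{\Lambda}\tilde{Q}=0$, and therefore $\tau_{\Lambda}T_{\mathcal{C}}\cong\tau_{\Lambda}\Omega_{\Lambda}D\Lambda$. Thus $\mathcal{F}(T_{\mathcal{C}})=\text{Cogen}(\tau_{\Lambda}\Omega_{\Lambda}D\Lambda)$.

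Next I would apply Lemma \ref{Homological Result}(1) to the module $\tau_{\Lambda}\Omega_{\Lambda}D\Lambda$ itself. By Theorem \ref{MainZ}, the hypothesis that $\Lambda$ is Auslander and tilted gives $\pd_{\Lambda}(\tau_{\Lambda}\Omega_{\Lambda}D\Lambda)\leq1$, which by Lemma \ref{Homological Result}(1) is equivalent to $\Hom_{\Lambda}(\tau_{\Lambda}^{-1}\Omega_{\Lambda}^{-1}\Lambda,\tau_{\Lambda}\Omega_{\Lambda}D\Lambda)=0$. Now for any $X\in\mathcal{F}(T_{\mathcal{C}})$, composing a map $\tau_{\Lambda}^{-1}\Omega_{\Lambda}^{-1}\Lambda\to X$ with the embedding $X\hookrightarrow(\tau_{\Lambda}\Omega_{\Lambda}D\Lambda)^d$ lands in $\Hom_{\Lambda}(\tau_{\Lambda}^{-1}\Omega_{\Lambda}^{-1}\Lambda,(\tau_{\Lambda}\Omega_{\Lambda}D\Lambda)^d)=0$; since the embedding is a monomorphism, this forces $\Hom_{\Lambda}(\tau_{\Lambda}^{-1}\Omega_{\Lambda}^{-1}\Lambda,X)=0$. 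Applying Lemma \ref{Homological Result}(1) once more, now in the reverse direction, yields $\pd_{\Lambda}X\leq1$, and Proposition \ref{prop2} then gives the splitting.

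I expect the main obstacle to be the identification $\tau_{\Lambda}T_{\mathcal{C}}\cong\tau_{\Lambda}\Omega_{\Lambda}D\Lambda$: it requires using that the single module $T_{\mathcal{C}}$ is simultaneously the tilting and the cotilting module of $\mathcal{C}_{\Lambda}$, so that the cotilting description of Theorem \ref{NRTZ Main}(3) may be substituted for the tilting one, and it requires checking that the projective-injective summands contribute nothing after applying $\tau_{\Lambda}$. The remaining delicate point is the slightly unusual self-application of Lemma \ref{Homological Result}(1) to $\tau_{\Lambda}\Omega_{\Lambda}D\Lambda$, which is precisely what converts the projective-dimension bound of Theorem \ref{MainZ} into the vanishing of Hom needed to push projective dimension down along the monomorphisms defining $\text{Cogen}(\tau_{\Lambda}\Omega_{\Lambda}D\Lambda)$.
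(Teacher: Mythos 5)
Your proof is correct and follows essentially the same route as the paper's: reduce to showing $\pd_{\Lambda}X\leq1$ for all $X\in\mathcal{F}(T_{\mathcal{C}})$ via Proposition \ref{prop2}, identify $\tau_{\Lambda}T_{\mathcal{C}}$ with $\tau_{\Lambda}\Omega_{\Lambda}D\Lambda$ via Theorem \ref{NRTZ Main}(3), and apply Lemma \ref{Homological Result}(1) twice across the cogeneration monomorphism. The only cosmetic differences are that you argue directly rather than by contradiction, and that you obtain $\pd_{\Lambda}(\tau_{\Lambda}\Omega_{\Lambda}D\Lambda)\leq1$ from Theorem \ref{MainZ} (which forces your case split on $\mathop{\text{gl.dim}}\Lambda$), whereas the paper cites Lemma \ref{something}(2), which holds for any tilted algebra.
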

\begin{proof}
Let $X\in\mathcal{F}(T_{\mathcal{C}})$.  We need to show $\pd_{\Lambda}X\leq1$.  Proposition $\ref{prop2}$ will then imply the torsion pair is splitting.  If $\pd_{\Lambda}X=2$, Lemma $\ref{Homological Result}$ (1) says $\text{Hom}_{\Lambda}(\tau_{\Lambda}^{-1}\Omega_{\Lambda}^{-1}\Lambda,X)\neq0.$
Since $X\in\mathcal{F}(T_{\mathcal{C}})=\text{Cogen}(\tau_{\Lambda}T_{\mathcal{C}})$, we have  $\text{Hom}_{\Lambda}(\tau_{\Lambda}^{-1}\Omega_{\Lambda}^{-1}\Lambda,\tau_{\Lambda}T_{\mathcal{C}})\neq0.$  Again, using Lemma $\ref{Homological Result}$ (1), $\pd_{\Lambda}(\tau_{\Lambda}T_{\mathcal{C}})=2$.  Since $T_{\mathcal{C}}$ is a cotilting module, Theorem $\ref{NRTZ Main}$ (3), says $\tau_{\Lambda}T_{\mathcal{C}}=\tau_{\Lambda}   (\bigoplus_i\Omega_{\Lambda}I_i)$ where $\Omega_{\Lambda}I_i$ is the syzygy of $I_i$ and the direct sum is taken over representatives of the isomorphism classes of all indecomposable injective non-projective $\Lambda$-modules $I_i$.  However, this a contradiction to Lemma $\ref{something}$ (2).  Thus, $\pd_{\Lambda}X\leq1$.  Since $X$ was arbitrary, we conclude the induced torsion pair $(\mathcal{T}(T_{\mathcal{C}}),\mathcal{F}(T_{\mathcal{C}}))$ is splitting.

\end{proof}

\subsection{$\mathcal{L}_{\Lambda}$, Projective Dimension, and Global Dimension}
Given $X,Y\in\mathop{\text{ind}}\Lambda$, we denote $X\leadsto Y$ in case there exists a chain of nonzero morphisms
\[ 
X=X_0\xrightarrow{f_1}X_1\xrightarrow{f_2}\cdots X_{t-1}\xrightarrow{f_t} X_t=Y
\]
with $t\geq0$, between indecomposable modules.  In this case we say $X$ is a predecessor of $Y$ and $Y$ is a predecessor of itself.  We now recall the definition of the left part of a module category.
\begin{mydef}
\label{def6}
Let $\Lambda$ be an algebra.  We denote by $\mathcal{L}_\Lambda$ the following subcategory of $\mathop{\text{ind}}\Lambda$:
\[
\mathcal{L}_{\Lambda}=\{Y\in\mathop{\text{ind}}\Lambda:\pd_{\Lambda}X\leq1~\text{for each}~X\leadsto Y\}.
\]
We call $\mathcal{L}_{\Lambda}$ the $\it{left~part}$ of the module category $\mathop{\text{mod}}\Lambda$.
\end{mydef}
It is easy to see that $\mathcal{L}_{\Lambda}$ is closed under predecessors and $\text{add}\mathcal{L}_{\Lambda}\subseteq\mathcal{P}^1(\Lambda)$.
The next result gives necessary and sufficient conditions for an indecomposable module $Y$ to be in $\mathcal{L}_{\Lambda}$.
\begin{theorem}$\emph{\cite[Theorem~1.1]{ALR}}$
\label{left}
Let $\Lambda$ be an algebra with $Y\in\mathop{\emph{ind}}\Lambda$. Then $Y\in\mathcal{L}_{\Lambda}$ if and only if, for every $X\in\mathop{\emph{ind}}\Lambda$ with projective dimension at least two, we have $\emph{Hom}_{\Lambda}(X,Y)=0$.
\end{theorem}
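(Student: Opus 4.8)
The plan is to prove the two implications separately. Throughout, write $\mathcal{Z}$ for the class of indecomposables $Y$ such that $\Hom_{\Lambda}(X,Y)=0$ for every indecomposable $X$ with $\pd_{\Lambda}X\geq 2$, so that the assertion to be proved is exactly $\mathcal{L}_{\Lambda}=\mathcal{Z}$ on objects. The inclusion $\mathcal{L}_{\Lambda}\subseteq\mathcal{Z}$ is immediate from the definitions: if $Y\in\mathcal{L}_{\Lambda}$ and $X$ is indecomposable with $\pd_{\Lambda}X\geq 2$, then a nonzero morphism $X\to Y$ would be a chain of length one, exhibiting $X$ as a predecessor of $Y$ and forcing $\pd_{\Lambda}X\leq 1$, a contradiction; hence $\Hom_{\Lambda}(X,Y)=0$ and $Y\in\mathcal{Z}$. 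So the content is entirely in $\mathcal{Z}\subseteq\mathcal{L}_{\Lambda}$.

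For the reverse inclusion I would show that $\mathcal{Z}$ is closed under predecessors. Granting this, if $Y\in\mathcal{Z}$ and $X\leadsto Y$ then $X\in\mathcal{Z}$, and applying the defining vanishing of $\mathcal{Z}$ to $\mathrm{id}_{X}$ excludes $\pd_{\Lambda}X\geq 2$; thus every predecessor of $Y$ has projective dimension at most one, i.e.\ $Y\in\mathcal{L}_{\Lambda}$. Since every chain is a composite of single nonzero morphisms, closure under predecessors reduces by induction on the length of the chain to the one-step statement: if $C\in\mathcal{Z}$ and there is a nonzero morphism $q\colon B\to C$ with $B$ indecomposable, then $B\in\mathcal{Z}$. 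Arguing by contraposition, suppose $B\notin\mathcal{Z}$, so there is an indecomposable $A$ with $\pd_{\Lambda}A\geq 2$ and a nonzero $p\colon A\to B$. Because $C\in\mathcal{Z}$ we have $\Hom_{\Lambda}(A,C)=0$, whence $qp=0$ and $p$ factors through $\ker q$. The task is then to manufacture \emph{some} indecomposable $A'$ with $\pd_{\Lambda}A'\geq 2$ and $\Hom_{\Lambda}(A',C)\neq 0$, contradicting $C\in\mathcal{Z}$. It is worth noting that the torsion pair whose torsion-free class is $\mathcal{Z}$ (equivalently, the torsion class generated by all indecomposables of projective dimension at least two) shows for free that $\mathcal{Z}$ is closed under submodules and extensions, and that there is no nonzero map from a projective-dimension-$\geq 2$ module \emph{directly} into $\mathcal{Z}$; what remains is the genuinely harder upgrade from immediate successors to arbitrary successors.

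The vanishing of the composite $qp$ while $p$ and $q$ are individually nonzero is the main obstacle: one cannot simply transport $A$ across the two maps, and the pd-$\geq 2$ behaviour is trapped inside $\ker q$, which is exactly where maps to $C$ die, so the witness $A'$ must be a genuinely new module. This is where Auslander--Reiten theory is essential, and I would attack it through the standard homological characterization that for indecomposable non-projective $M$ one has $\pd_{\Lambda}M\leq 1$ if and only if $\Hom_{\Lambda}(D\Lambda,\tau_{\Lambda}M)=0$. The hypothesis $\pd_{\Lambda}A\geq 2$ then supplies a nonzero map from an injective into $\tau_{\Lambda}A$, and I would propagate this datum along the factorization $A\to\ker q\hookrightarrow B\xrightarrow{q}C$, using the Auslander--Reiten sequence of $A$ together with the short exact sequence $0\to\ker q\to B\to\mathrm{im}\,q\to 0$ (with $\mathrm{im}\,q\subseteq C$) to relocate a module of projective dimension at least two into a position admitting a nonzero map to $C$. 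Controlling how projective dimension transfers across the zero composite via the $\tau_{\Lambda}$-formula is the single nontrivial point, and I expect it to be the crux of the entire argument.
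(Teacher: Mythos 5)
Your proposal is not a complete proof: it stops exactly at the point where the theorem's entire content lies. The easy inclusion $\mathcal{L}_{\Lambda}\subseteq\mathcal{Z}$ is fine, and the reduction of the converse to the one-step closure statement (if $C\in\mathcal{Z}$ and $q\colon B\to C$ is nonzero with $B$ indecomposable, then $B\in\mathcal{Z}$) is logically correct --- but note that this reduction is essentially a restatement of the hard direction, not a simplification of it. Having set up the contraposition ($p\colon A\to B$ nonzero, $\pd_{\Lambda}A\geq 2$, $qp=0$), you explicitly defer the crucial step: ``I would propagate this datum\dots,'' ``I expect it to be the crux.'' No mechanism is given for how the nonzero map $I\to\tau_{\Lambda}A$ (coming from $\pd_{\Lambda}A\geq2$ via $\pd_{\Lambda}M\leq 1\Leftrightarrow\Hom_{\Lambda}(D\Lambda,\tau_{\Lambda}M)=0$) produces an indecomposable $A'$ with $\pd_{\Lambda}A'\geq 2$ and $\Hom_{\Lambda}(A',C)\neq 0$. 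That this is a genuine obstruction, not a routine verification, can be seen by pushing your own setup one step further: under the contradiction hypothesis, $\Hom_{\Lambda}(B,C)\neq 0$ forces $\pd_{\Lambda}B\leq 1$, every indecomposable summand of $\mathrm{im}\,q$ embeds in $C$ and so has projective dimension at most one, and then the exact sequence $0\to\ker q\to B\to\mathrm{im}\,q\to 0$ forces $\pd_{\Lambda}(\ker q)\leq 1$ as well. So all the modules your factorization $A\to\ker q\hookrightarrow B\to C$ passes through are consistent with the contradiction hypothesis --- a nonzero map from a module of projective dimension two into a module of projective dimension at most one is perfectly possible --- and the needed witness $A'$ must be constructed by some further idea (new modules built from extensions or Auslander--Reiten sequences), which your sketch does not supply.

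For context: the paper itself offers no proof to compare against --- this statement is imported wholesale as \cite[Theorem~1.1]{ALR}, precisely because it is a nontrivial result about the left part $\mathcal{L}_{\Lambda}$ whose proof (due to Assem, Coelho, et al.) requires more than the formal torsion-pair and factorization manipulations you invoke. Your identification of the relevant tools ($\tau_{\Lambda}$-characterization of projective dimension, Auslander--Reiten sequences, factorization through $\ker q$) places you in the right neighborhood, but as written the proposal is an outline with its central lemma unproven.
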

The following result on projective dimensions is needed.
 \begin{lemma}$\emph{\cite[Appendix,~Proposition~4.7]{ASS}}$. 
 \label{Standard Projective Restrictions}
 Let $\Lambda$ be an algebra and suppose that $0\rightarrow L\rightarrow M\rightarrow N\rightarrow 0$ is a short exact sequence in $\mathop{\emph{mod}}\Lambda$.
\[
\pd_{\Lambda}N\leq \emph{max}(\pd_{\Lambda}M,1+\pd_{\Lambda}L), \emph{and equality holds if} \pd_{\Lambda}M\neq\pd_{\Lambda}L.
\]
 \end{lemma}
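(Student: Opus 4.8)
The plan is to derive both assertions from the long exact sequence in $\Ext$ obtained by applying $\Hom_{\Lambda}(-,X)$ to the given short exact sequence, where $X$ ranges over all of $\mathop{\text{mod}}\Lambda$. The single fact I will use repeatedly is that $\pd_{\Lambda}N\leq n$ holds if and only if $\Ext_{\Lambda}^{i}(N,X)=0$ for every $i>n$ and every module $X$, and dually that $\pd_{\Lambda}N$ is the supremum of the degrees $i$ in which some $\Ext_{\Lambda}^{i}(N,X)$ is nonzero. Applying the contravariant functor $\Hom_{\Lambda}(-,X)$ to $0\rightarrow L\rightarrow M\rightarrow N\rightarrow 0$ produces, for each $X$, the long exact sequence
\[
\cdots\rightarrow\Ext_{\Lambda}^{i-1}(L,X)\rightarrow\Ext_{\Lambda}^{i}(N,X)\rightarrow\Ext_{\Lambda}^{i}(M,X)\rightarrow\Ext_{\Lambda}^{i}(L,X)\rightarrow\cdots,
\]
and this is the only mechanism the argument requires.

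For the inequality, set $a=\pd_{\Lambda}L$ and $b=\pd_{\Lambda}M$, and let $n=\max(b,a+1)$. Given any $i>n$ one has $i>b$ and $i-1>a$, so in the three-term segment
\[
\Ext_{\Lambda}^{i-1}(L,X)\rightarrow\Ext_{\Lambda}^{i}(N,X)\rightarrow\Ext_{\Lambda}^{i}(M,X)
\]
the two outer terms vanish, forcing $\Ext_{\Lambda}^{i}(N,X)=0$. Since $X$ and $i>n$ are arbitrary, this yields $\pd_{\Lambda}N\leq n$, which is the claimed bound.

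For the equality clause I would split on the two ways $b\neq a$ can occur. Suppose first $b>a$, so $n=b$. Here $\Ext_{\Lambda}^{b}(L,X)=0$, hence the map $\Ext_{\Lambda}^{b}(N,X)\rightarrow\Ext_{\Lambda}^{b}(M,X)$ in the long exact sequence is surjective; choosing an $X$ with $\Ext_{\Lambda}^{b}(M,X)\neq0$, which exists because $\pd_{\Lambda}M=b$, gives $\Ext_{\Lambda}^{b}(N,X)\neq0$ and therefore $\pd_{\Lambda}N\geq b$. Suppose instead $a>b$, so $n=a+1$. Now $\Ext_{\Lambda}^{a}(M,X)=\Ext_{\Lambda}^{a+1}(M,X)=0$, so the relevant portion of the sequence collapses to an isomorphism $\Ext_{\Lambda}^{a}(L,X)\cong\Ext_{\Lambda}^{a+1}(N,X)$; selecting $X$ with $\Ext_{\Lambda}^{a}(L,X)\neq0$ produces $\Ext_{\Lambda}^{a+1}(N,X)\neq0$ and hence $\pd_{\Lambda}N\geq a+1$. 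In both cases the lower bound equals $n$, which combined with the inequality gives $\pd_{\Lambda}N=n$.

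The computation is routine, so I do not anticipate a genuine obstacle; the only place that rewards attention is the boundary case $b=a+1$ in the first situation above, where $\Ext_{\Lambda}^{b-1}(L,X)=\Ext_{\Lambda}^{a}(L,X)$ may fail to vanish. This causes no difficulty, since the lower bound there is extracted solely from the surjectivity of $\Ext_{\Lambda}^{b}(N,X)\rightarrow\Ext_{\Lambda}^{b}(M,X)$ and never uses injectivity of the preceding map. Finally, if infinite projective dimensions are permitted, the inequality is vacuous whenever its right-hand side is infinite, and the same Ext-vanishing arguments, read with arbitrarily large $i$, show $\pd_{\Lambda}N=\infty$ when exactly one of $a,b$ is infinite, so the equality clause persists in that generality as well.
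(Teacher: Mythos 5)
Your proof is correct, and it is the standard long-exact-sequence argument for this fact; the paper itself offers no proof here, since it quotes the result verbatim from \cite[Appendix, Proposition 4.7]{ASS}, where it is established by exactly this kind of Ext-vanishing and dimension-shifting reasoning. Both the inequality and the equality clause (including the boundary case $\pd_{\Lambda}M = \pd_{\Lambda}L + 1$ and the infinite-dimension cases) are handled soundly, so there is nothing to add.
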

In computing the global dimension of an algebra, the following theorem due to Auslander is very useful.
\begin{theorem}$\emph{\cite{AU1}}$
\label{Global}
If $\Lambda$ is an algebra, then 
\[
\mathop{\emph{gl.dim}}\Lambda=1+\emph{max}\{\pd_{\Lambda}(\emph{rad}e\Lambda); e\in\Lambda~is~a~primitive~idempotent\}.
\]

\end{theorem}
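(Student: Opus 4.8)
The plan is to compute the global dimension as the supremum of the projective dimensions of the simple modules and then translate this into a statement about the radicals of the indecomposable projectives. First I would recall the standard fact that $\mathop{\text{gl.dim}}\Lambda=\sup\{\pd_{\Lambda}S:S\text{ simple}\}$; this follows by dimension shifting along composition series, since the long exact sequence in $\Ext$ attached to a short exact sequence $0\to M'\to M\to S\to 0$ with $S$ simple and $M'$ of smaller length shows, by induction on length, that $\Ext_{\Lambda}^{n}(S,-)=0$ for all simple $S$ forces $\Ext_{\Lambda}^{n}(M,-)=0$ for every $M$. Hence it suffices to control $\pd_{\Lambda}S$ for each simple $S$.

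Next I would parametrize the simple modules by primitive idempotents. For a primitive idempotent $e$ the module $e\Lambda$ is indecomposable projective with simple top $S_e=e\Lambda/\mathop{\text{rad}}e\Lambda$, and every simple right $\Lambda$-module is isomorphic to some $S_e$. The projective cover of $S_e$ yields the short exact sequence
\[
0\to\mathop{\text{rad}}e\Lambda\to e\Lambda\to S_e\to 0 .
\]
Since $\pd_{\Lambda}(e\Lambda)=0$, Lemma~\ref{Standard Projective Restrictions} applied to this sequence gives $\pd_{\Lambda}S_e\leq\max(0,\,1+\pd_{\Lambda}(\mathop{\text{rad}}e\Lambda))$.

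The key step, and the place where the only real care is needed, is upgrading this inequality to the equality $\pd_{\Lambda}S_e=1+\pd_{\Lambda}(\mathop{\text{rad}}e\Lambda)$ whenever $\mathop{\text{rad}}e\Lambda\neq0$. When $\pd_{\Lambda}(\mathop{\text{rad}}e\Lambda)>0$ this is immediate from the equality clause of Lemma~\ref{Standard Projective Restrictions}, since then $\pd_{\Lambda}(e\Lambda)=0\neq\pd_{\Lambda}(\mathop{\text{rad}}e\Lambda)$. The delicate boundary case is $\pd_{\Lambda}(\mathop{\text{rad}}e\Lambda)=0$, where that clause does not apply; here I would argue directly that $S_e$ is not projective, because its projective cover $e\Lambda\to S_e$ has nonzero kernel $\mathop{\text{rad}}e\Lambda$, so $\pd_{\Lambda}S_e\geq1$, and together with the bound $\pd_{\Lambda}S_e\leq1$ this forces $\pd_{\Lambda}S_e=1=1+\pd_{\Lambda}(\mathop{\text{rad}}e\Lambda)$. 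If instead $\mathop{\text{rad}}e\Lambda=0$, then $S_e=e\Lambda$ is projective and $\pd_{\Lambda}S_e=0$.

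Finally I would assemble the pieces. Taking the maximum over a complete set of primitive idempotents,
\[
\mathop{\text{gl.dim}}\Lambda=\max_{e}\pd_{\Lambda}S_e=1+\max_{e}\pd_{\Lambda}(\mathop{\text{rad}}e\Lambda),
\]
where on the right one discards the summands with $\mathop{\text{rad}}e\Lambda=0$ (equivalently setting $\pd_{\Lambda}0=-\infty$); in the nonsemisimple case the maximum is realized on a simple with nonprojective, hence nonzero, radical, so the shift by $1$ is legitimate. The genuinely semisimple case, in which every $\mathop{\text{rad}}e\Lambda$ vanishes and $\mathop{\text{gl.dim}}\Lambda=0$, is then handled separately by convention.
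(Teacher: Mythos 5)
Your proof is correct. Note that the paper does not prove this statement at all---it is imported as a classical result of Auslander \cite{AU1}---so there is no internal argument to compare against; what you have written is the standard proof underlying that citation: reduce the global dimension to the supremum of $\pd_{\Lambda}S$ over simples via induction on composition length, then shift dimensions along the projective cover sequence $0\to\operatorname{rad}e\Lambda\to e\Lambda\to S_e\to 0$. Your handling of the two boundary cases is exactly where care is needed and is done correctly: when $\pd_{\Lambda}(\operatorname{rad}e\Lambda)=0$ with $\operatorname{rad}e\Lambda\neq 0$ the equality clause of Lemma~\ref{Standard Projective Restrictions} is unavailable, and your direct argument (a projective simple would force its projective cover to be an isomorphism, killing the radical) fills that gap; and the semisimple case genuinely requires the convention $\pd_{\Lambda}0=-\infty$ (or separate treatment), as you say, since otherwise the displayed formula would read $0=1+\max\{\dots\}$.
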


\section{Main Result}
 We begin with two straightforward propositions.
 \begin{prop}
 \label{easy1}
 Suppose $\Lambda$ is a $1$-Auslander-Gorenstein algebra.  Denote by $\mathcal{C}_{\Lambda}$ the 
the full subcategory consisting of all modules generated and cogenerated by the direct sum of all indecomposable projective-injective $\Lambda$-modules.  Let $T_{\mathcal{C}}$ denote the unique tilting-cotilting module in $\mathcal{C}_{\Lambda}$.  Then  $\mathcal{P}^1(\Lambda)\subseteq\emph{Cogen}T_{\mathcal{C}}$. 
 \end{prop}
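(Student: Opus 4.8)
The plan is to reinterpret $\text{Cogen}\,T_{\mathcal{C}}$ as an Ext-vanishing class and then apply Lemma~\ref{small}. Since $\Lambda$ is $1$-Auslander-Gorenstein, $T_{\mathcal{C}}$ is a tilting-cotilting module in $\mathcal{C}_{\Lambda}$; in particular it is a cotilting $\Lambda$-module. Recall that the torsion pair induced by a cotilting module $C$ is $(\text{Gen}(\tau_{\Lambda}^{-1}C),\mathcal{T}(C))=(\mathcal{F}(C),\text{Cogen}(C))$, so reading off the torsionfree class gives the identification $\text{Cogen}(T_{\mathcal{C}})=\mathcal{T}(T_{\mathcal{C}})=\{M\in\mathop{\text{mod}}\Lambda : \text{Ext}_{\Lambda}^1(M,T_{\mathcal{C}})=0\}$. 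This is the key reformulation that turns the claimed inclusion into a homological vanishing statement.

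With this identification in hand, the containment $\mathcal{P}^1(\Lambda)\subseteq\text{Cogen}\,T_{\mathcal{C}}$ reduces to showing that $\text{Ext}_{\Lambda}^1(Y,T_{\mathcal{C}})=0$ for every $Y$ with $\pd_{\Lambda}Y\leq1$. First I would split into two cases. If $\pd_{\Lambda}Y=0$, then $Y$ is projective and $\text{Ext}_{\Lambda}^1(Y,-)=0$ trivially. If $\pd_{\Lambda}Y=1$, I would invoke Lemma~\ref{small}: since $T_{\mathcal{C}}\in\mathcal{C}_{\Lambda}$ and $\pd_{\Lambda}Y=1$, it yields $\text{Ext}_{\Lambda}^1(Y,T_{\mathcal{C}})=0$ directly. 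In either case $Y\in\mathcal{T}(T_{\mathcal{C}})=\text{Cogen}\,T_{\mathcal{C}}$, which proves the proposition.

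There is essentially no hard step here; the substance is packaged into the cotilting torsion pair and into Lemma~\ref{small}, both stated earlier. The only point requiring a little care is that $\mathcal{P}^1(\Lambda)$ contains arbitrary (not necessarily indecomposable) modules, so I would note that $\text{Ext}_{\Lambda}^1(-,T_{\mathcal{C}})$ is additive in its first argument and that Lemma~\ref{small} applies to any module of projective dimension exactly $1$; hence a decomposable $Y$ with $\pd_{\Lambda}Y\leq1$ is handled by the same two cases applied to each summand. Thus I do not anticipate a genuine obstacle, and the main conceptual move is simply recognizing the cotilting side of $T_{\mathcal{C}}$ so that $\text{Cogen}\,T_{\mathcal{C}}$ becomes the Ext-orthogonal class to which Lemma~\ref{small} directly applies.
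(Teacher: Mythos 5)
Your proof is correct and follows essentially the same route as the paper: the case $\pd_{\Lambda}X=1$ is handled by Lemma~\ref{small} combined with the cotilting identification $\mathrm{Cogen}\,T_{\mathcal{C}}=\mathcal{T}(T_{\mathcal{C}})$, exactly as in the paper's argument. The only cosmetic difference is that you also route the projective case through the Ext-vanishing identification, whereas the paper cites the tilting short exact sequence $0\rightarrow\Lambda\rightarrow T'\rightarrow T''\rightarrow0$ to embed projectives into $\mathrm{add}\,T_{\mathcal{C}}$; both are valid.
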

 \begin{proof}
 Let $X\in\mathcal{P}^1(\Lambda)$.  If $X$ is projective, then $X\in\text{Cogen}T_{\mathcal{C}}$ by the definition of a tilting module.  If $\pd_{\Lambda}X=1$, then Lemma $\ref{small}$ gives $\text{Ext}_{\Lambda}^1(X,T_{\mathcal{C}})=0$.  Since $T_{\mathcal{C}}$ is a cotilting module, we must have $X\in\text{Cogen}T_{\mathcal{C}}$. 
 \end{proof}
 
 \begin{prop}
 \label{easy2}
Suppose $\Lambda$ is a $1$-Auslander-Gorenstein algebra.  Denote by $\mathcal{C}_{\Lambda}$ the 
the full subcategory consisting of all modules generated and cogenerated by the direct sum of all indecomposable projective-injective $\Lambda$-modules.  Let $T_{\mathcal{C}}$ denote the unique tilting-cotilting module in $\mathcal{C}_{\Lambda}$.  Then $\Lambda$ is an Auslander algebra if and only if $\mathcal{P}^1(\Lambda)=\emph{Cogen}T_{\mathcal{C}}$.
\end{prop}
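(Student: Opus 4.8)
The plan is to exploit the inclusion $\mathcal{P}^1(\Lambda)\subseteq\text{Cogen}T_{\mathcal{C}}$ already furnished by Proposition~\ref{easy1}, so that the entire statement reduces to proving that the reverse inclusion $\text{Cogen}T_{\mathcal{C}}\subseteq\mathcal{P}^1(\Lambda)$ holds precisely when $\Lambda$ is an Auslander algebra. Since $\Lambda$ is $1$-Auslander-Gorenstein it already satisfies $\text{domdim}\Lambda\geq2$ and $\mathcal{C}_{\Lambda}$ contains the tilting-cotilting module $T_{\mathcal{C}}$; hence by Proposition~\ref{general} the property of being Auslander is, for such $\Lambda$, equivalent to $\text{gl.dim}\Lambda<\infty$. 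This observation is the hinge of the argument: I will translate the reverse inclusion into a bound on the global dimension.

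For the forward direction, assume $\Lambda$ is Auslander, so $\text{gl.dim}\Lambda\leq2$ and $\pd_{\Lambda}T_{\mathcal{C}}\leq1$ since $T_{\mathcal{C}}$ is a tilting module. Given $X\in\text{Cogen}T_{\mathcal{C}}$, fix a monomorphism and form the short exact sequence $0\rightarrow X\rightarrow T_{\mathcal{C}}^d\rightarrow C\rightarrow0$. Applying $\text{Hom}_{\Lambda}(-,N)$ for an arbitrary module $N$ and passing to the long exact sequence in $\text{Ext}$, the term $\text{Ext}_{\Lambda}^2(T_{\mathcal{C}}^d,N)$ vanishes because $\pd_{\Lambda}T_{\mathcal{C}}\leq1$, while $\text{Ext}_{\Lambda}^3(C,N)$ vanishes because $\text{gl.dim}\Lambda\leq2$; squeezed between them, $\text{Ext}_{\Lambda}^2(X,N)=0$. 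As $N$ was arbitrary this gives $\pd_{\Lambda}X\leq1$, i.e.\ $X\in\mathcal{P}^1(\Lambda)$, and combined with Proposition~\ref{easy1} yields the desired equality.

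For the converse, assume $\mathcal{P}^1(\Lambda)=\text{Cogen}T_{\mathcal{C}}$ and aim to bound the global dimension. The key remark is that every syzygy $\Omega_{\Lambda}M$ embeds in a projective module, and every projective lies in $\text{Cogen}T_{\mathcal{C}}$ (the tilting sequence $0\rightarrow\Lambda\rightarrow T'\rightarrow T''\rightarrow0$ embeds $\Lambda$, hence every projective, into $\text{add}\,T_{\mathcal{C}}$). Since $\text{Cogen}T_{\mathcal{C}}$ is closed under submodules, every syzygy lies in $\text{Cogen}T_{\mathcal{C}}=\mathcal{P}^1(\Lambda)$, so $\pd_{\Lambda}\Omega_{\Lambda}M\leq1$ and therefore $\pd_{\Lambda}M\leq2$ for every $M$. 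Thus $\text{gl.dim}\Lambda\leq2<\infty$, and Proposition~\ref{general} forces $\Lambda$ to be an Auslander algebra.

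I expect the main conceptual step, rather than any computation, to be the recognition that the reverse inclusion is exactly a statement about syzygies: once one sees that syzygies are automatically cogenerated by the projectives (hence by $T_{\mathcal{C}}$), the converse collapses into a global-dimension bound, and the forward direction is a one-line $\text{Ext}$ squeeze using $\pd_{\Lambda}T_{\mathcal{C}}\leq1$. The only point requiring care is the passage from $\text{gl.dim}\Lambda<\infty$ to the Auslander property, which is not automatic and relies essentially on Proposition~\ref{general} together with the standing hypothesis that $\Lambda$ is $1$-Auslander-Gorenstein (so that $\mathcal{C}_{\Lambda}$ already contains a tilting-cotilting module).
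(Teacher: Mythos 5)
Your proof is correct and follows essentially the same route as the paper: both directions hinge on the exact sequence $0\rightarrow X\rightarrow T_{\mathcal{C}}^d\rightarrow C\rightarrow 0$ coming from cogeneration together with $\pd_{\Lambda}T_{\mathcal{C}}\leq1$ for the forward inclusion, and on bounding $\mathop{\text{gl.dim}}\Lambda$ by $2$ (via submodules of projectives lying in $\text{Cogen}T_{\mathcal{C}}=\mathcal{P}^1(\Lambda)$) followed by Proposition~\ref{general} for the converse. The only differences are cosmetic: you run an $\text{Ext}$-squeeze where the paper cites the projective-dimension lemma for short exact sequences, and you use syzygies where the paper uses radicals of projectives together with Auslander's global dimension theorem.
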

\begin{proof}
Assume $\Lambda$ is an Auslander algebra.  We know from Proposition $\ref{easy1}$ that $\mathcal{P}^1(\Lambda)\subseteq\text{Cogen}T_{\mathcal{C}}$.  Now, assume $X\in\text{Cogen}T_{\mathcal{C}}$ but $X\not\in\mathcal{P}^1(\Lambda)$.  Since $\Lambda$ is an Auslander algebra, we must have $\pd_{\Lambda}X=2$.  Since $X\in\text{Cogen}T_{\mathcal{C}}$, we have a short exact sequence
\[
0\rightarrow X\rightarrow T_{\mathcal{C}}'\rightarrow Y\rightarrow 0 
\]
where $T_{\mathcal{C}}'\in\add T_{\mathcal{C}}$.  Since $\pd_{\Lambda}T_{\mathcal{C}}\leq1$, Lemma $\ref{Standard Projective Restrictions}$ says $\pd_{\Lambda}Y=3$ and we have a contradiction.  Thus,  
$\mathcal{P}^1(\Lambda)=\text{Cogen}T_{\mathcal{C}}$.
\par
Next, assume $\mathcal{P}^1(\Lambda)=\text{Cogen}T_{\mathcal{C}}$.  Let $P$ be any indecomposable projective module.  Since $T_{\mathcal{C}}$ is a tilting module, we know $P\in\text{Cogen}T_{\mathcal{C}}$.  Thus, the radical of $P$ belongs to $\text{Cogen}T_{\mathcal{C}}$ and must have projective dimension less than or equal to one.  Theorem $\ref{Global}$ implies the global dimension of $\Lambda$ is finite.  Proposition $\ref{general}$ gives $\Lambda$ is an Auslander algebra.
\end{proof}
We are now ready to prove our main result.
\begin{theorem}
Suppose $\Lambda$ is a $1$-Auslander-Gorenstein algebra.  Denote by $\mathcal{C}_{\Lambda}$  
the full subcategory consisting of all modules generated and cogenerated by the direct sum of all indecomposable projective-injective $\Lambda$-modules.  Let $T_{\mathcal{C}}$ denote the unique tilting-cotilting module in $\mathcal{C}_{\Lambda}$.  Then $\Lambda$ is a tilted algebra if and only if $\emph{add}\mathcal{L}_{\Lambda}=\emph{Cogen}T_{\mathcal{C}}$.
\end{theorem}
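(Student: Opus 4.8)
The plan is to reduce everything to the Auslander case and then handle the two implications separately. First I would record the chain of inclusions $\add\mathcal{L}_{\Lambda}\subseteq\mathcal{P}^1(\Lambda)\subseteq\operatorname{Cogen}T_{\mathcal{C}}$, the first being noted just after Definition~\ref{def6} and the second being Proposition~\ref{easy1}. Hence the asserted equality $\add\mathcal{L}_{\Lambda}=\operatorname{Cogen}T_{\mathcal{C}}$ is equivalent to the single inclusion $\operatorname{Cogen}T_{\mathcal{C}}\subseteq\add\mathcal{L}_{\Lambda}$, and whenever it holds it forces $\add\mathcal{L}_{\Lambda}=\mathcal{P}^1(\Lambda)=\operatorname{Cogen}T_{\mathcal{C}}$. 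In either implication I would first establish that $\Lambda$ is an Auslander algebra: if $\Lambda$ is tilted then $\operatorname{gl.dim}\Lambda\leq2<\infty$, so Proposition~\ref{general} applies; if the equality holds then $\mathcal{P}^1(\Lambda)=\operatorname{Cogen}T_{\mathcal{C}}$ and Proposition~\ref{easy2} applies. Once $\Lambda$ is Auslander, Proposition~\ref{easy2} gives $\operatorname{Cogen}T_{\mathcal{C}}=\mathcal{P}^1(\Lambda)$, so the statement reduces to: an Auslander algebra $\Lambda$ is tilted if and only if $\add\mathcal{L}_{\Lambda}=\mathcal{P}^1(\Lambda)$. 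If $\operatorname{gl.dim}\Lambda\leq1$ the algebra is hereditary and both conditions hold trivially, so I would assume $\operatorname{gl.dim}\Lambda=2$.

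For the implication ``equality $\Rightarrow$ tilted'' I would argue by contradiction via Theorem~\ref{MainZ}: if $\Lambda$ is not tilted then $\pd_{\Lambda}(\tau_{\Lambda}\Omega_{\Lambda}D\Lambda)=2$, so there is an indecomposable non-projective summand $U$ of $\Omega_{\Lambda}D\Lambda$ with $\pd_{\Lambda}\tau_{\Lambda}U=2$; set $V=\tau_{\Lambda}U$. By the cotilting description in Theorem~\ref{NRTZ Main}(3) the module $\Omega_{\Lambda}D\Lambda$ is a summand of $T_{\mathcal{C}}$, so $U$ is a summand of $T_{\mathcal{C}}$, whence $\pd_{\Lambda}U\leq1$ and $U\in\mathcal{P}^1(\Lambda)=\add\mathcal{L}_{\Lambda}$, i.e. $U\in\mathcal{L}_{\Lambda}$. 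The almost split sequence $0\to V\to E\to U\to 0$ supplies, for any indecomposable summand $E_j$ of $E$, nonzero (irreducible) maps $V\to E_j\to U$, so $V\leadsto U$, that is, $V$ is a predecessor of $U$. Since $\mathcal{L}_{\Lambda}$ is closed under predecessors, $V\in\mathcal{L}_{\Lambda}\subseteq\mathcal{P}^1(\Lambda)$, forcing $\pd_{\Lambda}V\leq1$, a contradiction. Hence $\pd_{\Lambda}(\tau_{\Lambda}\Omega_{\Lambda}D\Lambda)\leq1$ and $\Lambda$ is tilted.

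For the implication ``tilted $\Rightarrow$ equality'' I must show $\mathcal{P}^1(\Lambda)\subseteq\add\mathcal{L}_{\Lambda}$, i.e. (by Theorem~\ref{left}) that $\Hom_{\Lambda}(Z,X)=0$ whenever $X$ is indecomposable with $\pd_{\Lambda}X\leq1$ and $Z$ is indecomposable with $\pd_{\Lambda}Z=2$. The obvious tool, the splitting of the tilting torsion pair $(\mathcal{T}(T_{\mathcal{C}}),\mathcal{F}(T_{\mathcal{C}}))$ of Proposition~\ref{splits}, is not by itself enough, since modules of projective dimension at most $1$ (for instance the projective--injective summands of $T_{\mathcal{C}}$) can lie in the torsion class $\mathcal{T}(T_{\mathcal{C}})$, where the torsion-pair vanishing gives nothing. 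Instead I would use the dual splitting: applying Proposition~\ref{splits} to $\Lambda^{\mathrm{op}}$ (again Auslander and tilted) and dualizing shows that the cotilting torsion pair $(\operatorname{Gen}(\tau_{\Lambda}^{-1}T_{\mathcal{C}}),\operatorname{Cogen}T_{\mathcal{C}})$ is splitting. Its torsion-free class is $\operatorname{Cogen}T_{\mathcal{C}}=\mathcal{P}^1(\Lambda)$, so every indecomposable $Z$ with $\pd_{\Lambda}Z=2$ fails to lie in $\operatorname{Cogen}T_{\mathcal{C}}$ and therefore lies in the torsion class $\operatorname{Gen}(\tau_{\Lambda}^{-1}T_{\mathcal{C}})$, while every $X$ with $\pd_{\Lambda}X\leq1$ lies in the torsion-free class. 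The defining vanishing of a torsion pair then yields $\Hom_{\Lambda}(Z,X)=0$, as required.

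The main obstacle is precisely this forward implication: recognizing that the splitting supplied by Proposition~\ref{splits} does not directly annihilate the relevant homomorphisms, and that one must pass to the dual cotilting torsion pair. Establishing that this cotilting pair splits is the technical heart, and I would verify it either by the opposite-algebra dualization above or directly, using the identity $\tau_{\Lambda}^{-1}T_{\mathcal{C}}=\tau_{\Lambda}^{-1}\Omega_{\Lambda}^{-1}\Lambda$ (read off from Theorem~\ref{NRTZ Main}(2), since $\tau_{\Lambda}^{-1}$ kills the projective--injective summands) together with Lemma~\ref{something}(1) and the dual of Proposition~\ref{prop2}, mirroring the proof of Proposition~\ref{splits}.
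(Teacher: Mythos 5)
Your proof is correct, but the forward implication takes a genuinely different route from the paper's. The backward direction (equality $\Rightarrow$ tilted) is essentially the paper's argument: reduce to the Auslander case via Propositions \ref{easy1} and \ref{easy2}, then use predecessor-closure of $\mathcal{L}_{\Lambda}$ to force $\pd_{\Lambda}(\tau_{\Lambda}\Omega_{\Lambda}D\Lambda)\leq 1$ and invoke Theorem \ref{MainZ}; you merely phrase it contrapositively through an explicit summand $U$ of $\Omega_{\Lambda}D\Lambda$, where the paper applies $\tau_{\Lambda}$ to all of $T_{\mathcal{C}}$ at once. The real divergence is in the tilted $\Rightarrow$ equality direction. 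The paper stays with the tilting torsion pair $(\mathcal{T}(T_{\mathcal{C}}),\mathcal{F}(T_{\mathcal{C}}))$ and, confronted with an indecomposable $Y$ of projective dimension $2$ with $\Hom_{\Lambda}(Y,T_{\mathcal{C}})\neq 0$, runs a three-step contradiction: first $Y\in\mathcal{T}(T_{\mathcal{C}})$ (via Lemma \ref{Homological Result}(2) and the pd/id dichotomy for tilted algebras), then $\tau_{\Lambda}Y\in\mathcal{T}(T_{\mathcal{C}})$ (via Proposition \ref{Ext} and Theorem \ref{Tilting}), and finally the Auslander--Reiten formulas to conclude $\Hom_{\Lambda}(Y,T_{\mathcal{C}})=0$. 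You correctly diagnose why the mere splitting of this tilting pair is insufficient --- both $Y$ and $X$ can sit in the torsion class --- and instead pass to the cotilting torsion pair $(\operatorname{Gen}(\tau_{\Lambda}^{-1}T_{\mathcal{C}}),\operatorname{Cogen}T_{\mathcal{C}})$, whose splitting makes the Hom-vanishing an instant consequence of the torsion-pair axiom, since $\operatorname{Cogen}T_{\mathcal{C}}=\mathcal{P}^1(\Lambda)$ is exactly the torsion-free class. The trade-off: the paper's route uses only results it has already stated, at the cost of the delicate AR-formula finish; your route is structurally cleaner and makes the conclusion transparent, but its technical heart --- the splitting of the cotilting pair --- requires ingredients the paper never states (the dual of Proposition \ref{prop2}, plus either the facts that opposites of tilted and Auslander algebras are again such with $D$ exchanging the two canonical modules, or a direct mirror of Proposition \ref{splits} using $\tau_{\Lambda}^{-1}T_{\mathcal{C}}=\tau_{\Lambda}^{-1}\Omega_{\Lambda}^{-1}\Lambda$ from Theorem \ref{NRTZ Main}(2) together with Lemma \ref{something}(1) and Lemma \ref{Homological Result}). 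Both of your suggested verifications of that splitting do go through, so the argument is sound.
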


\begin{proof}
Assume $\text{add}\mathcal{L}_{\Lambda}=\text{Cogen}T_{\mathcal{C}}$.  First, let's show $\Lambda$ is an Auslander algebra.  We know $\mathcal{P}^1(\Lambda)\subseteq\text{Cogen}T_{\mathcal{C}}$ from Proposition $\ref{easy1}$ and $\text{add}\mathcal{L}_{\Lambda}\subseteq\mathcal{P}^1(\Lambda)$ from Definition 
$\ref{def6}$.  Thus, $\mathcal{P}^1(\Lambda)=\text{Cogen}T_{\mathcal{C}}$ and Proposition $\ref{easy2}$ gives $\Lambda$ is an Auslander algebra.
\par
By our original assumption, we certainly have $T_{\mathcal{C}}\in\text{add}\mathcal{L}_{\Lambda}$.  Thus, $\tau_{\Lambda}(T_{\mathcal{C}})\in\text{add}\mathcal{L}_{\Lambda}$ since $\mathcal{L}_{\Lambda}$ is closed under predecessors.  We know from Theorem $\ref{NRTZ Main}$ (3) that $\tau_{\Lambda}(T_{\mathcal{C}})=\tau_{\Lambda}(\bigoplus_i\Omega_{\Lambda}I_i)$ where $\Omega_{\Lambda}I_i$ is the syzygy of $I_i$ and the direct sum is taken over representatives of the isomorphism classes of all indecomposable injective non-projective $\Lambda$-modules $I_i$.  Theorem $\ref{MainZ}$ implies $\Lambda$ is tilted.
\par
Now, assume $\Lambda$ is a tilted algebra.  We know $\mathcal{P}^1(\Lambda)=\text{Cogen}T_{\mathcal{C}}$ from Proposition $\ref{easy2}$ and $\text{add}\mathcal{L}_{\Lambda}\subseteq\mathcal{P}^1(\Lambda)$ from Definition $\ref{def6}$.  This gives $\text{add}\mathcal{L}_{\Lambda}\subseteq\text{Cogen}T_{\mathcal{C}}$.  Next, assume $X$ is an indecomposable module such that $X\in\text{Cogen}T_{\mathcal{C}}$ but $X\not\in\mathcal{L}_{\Lambda}$.  By Theorem $\ref{left}$, there exists an indecomposable module $Y$ with $\pd_{\Lambda}Y=2$ such that $\text{Hom}_{\Lambda}(Y,X)\neq0$.  Since $X\in\text{Cogen}T_{\mathcal{C}}$, we have $\text{Hom}_{\Lambda}(Y,T_{\mathcal{C}})\neq0$.
\par
Consider the induced torsion pair $(\mathcal{T}(T_{\mathcal{C}}),\mathcal{F}(T_{\mathcal{C}}))$.  Since $\Lambda$ is tilted, Proposition $\ref{splits}$ implies the torsion pair is splitting.  Thus, $Y\in\mathcal{T}(T_{\mathcal{C}})$ or $Y\in\mathcal{F}(T_{\mathcal{C}})$.  If $Y\in\mathcal{F}(T_{\mathcal{C}})$, then Theorem $\ref{NRTZ Main}$ (3) and Lemma $\ref{Homological Result}$ (2) imply $\id_{\Lambda}Y=2$.
Since $\pd_{\Lambda}Y=2$, this contradicts the well known fact that $\pd_{\Lambda}M\leq1$ or $\id_{\Lambda}M\leq1$ for every $M\in\mathop{\text{ind}}\Lambda$ when $\Lambda$ is a tilted algebra.
We conclude $\id_{\Lambda}Y\leq1$ and $Y\in\mathcal{T}(T_{\mathcal{C}})$.
\par
Next, we wish to show $\tau_{\Lambda}Y\in\mathcal{T}(T_{\mathcal{C}})$.  Since $Y$ is an indecomposable non-projective module, we know $\tau_{\Lambda}Y$ is an indecomposable non-zero module.  Since $(\mathcal{T}(T_{\mathcal{C}}),\mathcal{F}(T_{\mathcal{C}}))$ is splitting, if $\tau_{\Lambda}Y\not\in\mathcal{T}(T_{\mathcal{C}})$, then $\tau_{\Lambda}Y\in\mathcal{F}(T_{\mathcal{C}})$.
Proposition $\ref{Ext}$ then implies $Y$ is Ext-projective in $\mathcal{T}(T_{\mathcal{C}})$ and Theorem $\ref{Tilting}$ shows $Y\in\add T_{\mathcal{C}}$.  With $\pd_{\Lambda}Y=2$ and $T_{\mathcal{C}}$ a tilting module, we have a contradiction.
\par
Since $\tau_{\Lambda}Y\in\mathcal{T}(T_{\mathcal{C}})$, we known $\Ext^1_{\Lambda}(T_{\mathcal{C}},\tau_{\Lambda}Y)=0$.  We also know, by the Auslander-Reiten formulas and the fact $T_{\mathcal{C}}$ is a tilting module, that $D\Hom_{\Lambda}(\tau_{\Lambda}Y,\tau_{\Lambda}(T_{\mathcal{C}}))=0$.  Now, Theorem $\ref{NRTZ Main}$ (3) shows $\tau_{\Lambda}(T_{\mathcal{C}})=\tau_{\Lambda}(\bigoplus_i\Omega_{\Lambda}I_i)$.  Lemma $\ref{Homological Result}$ (2) gives $\id_{\Lambda}(\tau_{\Lambda}Y)\leq1$.  Again, using the Auslander-Reiten formulas, $\Ext^1_{\Lambda}(T_{\mathcal{C}},\tau_{\Lambda}Y)=0$ implies $D\Hom_{\Lambda}(\tau_{\Lambda}^{-1}(\tau_{\Lambda}Y),T_{\mathcal{C}})\cong\ D\Hom_{\Lambda}(Y,T_{\mathcal{C}})=0$.  This is a contradiction and we conclude $X\in\mathcal{L}_{\Lambda}$.  Thus, $\text{add}\mathcal{L}_{\Lambda}=\text{Cogen}T_{\mathcal{C}}$.
\end{proof}

\noindent Mathematics Faculty, University of Connecticut-Waterbury, Waterbury, CT 06702, USA
\it{E-mail address}: \bf{stephen.zito@uconn.edu}


\begin{thebibliography}{1}


\bibitem{ALR} I. Assem, M. Lanzilotta, and M. Redondo, \emph{Laura skew group algebras}, Comm. Algebra~$\bf{35}$ (2007), 2241-2257.

\bibitem{ASS} I. Assem, D. Simson, and A. Skowronski, \emph{Elements of the Representation Theory of}
\emph{Associative Algebras, 1: Techniques of Representation Theory}, London Mathematical Society Student Texts 65, Cambridge University Press, 2006.

\bibitem{AU1} M. Auslander, \emph{On the dimension of modules and algebras III}, Nagoya Math.~$\bf{9}$ (1955), 67-77.


\bibitem{AU} M. Auslander, \emph{Representation theory of Artin algebras II}, Comm. Algebra~$\bf{1}$ (1974), 269-310. 


\bibitem{CBS} W. Crawley-Boevey and J. Sauter, \emph{On quiver Grassmannians and orbit closures for representation-finite algebras}, Math. Z.~$\bf{285}$ (2017), 367-395.

\bibitem{HR} D. Happel and C. M. Ringel, \emph{Tilted algebras}, Trans.  Amer.  Math.  Soc.~$\bf{274}$ (1982), 399-443.

\bibitem{OS} O. Iyama and {\O}. Solberg, \emph{Auslander-Gorenstein algebras and precluster tilting}, Advances in Mathematics~$\bf{326}$ (2018), 200-240.

\bibitem{JMS} A. Jaworska, P. Malicki, and A. Skowro$\acute{\text{n}}$ski, \emph{Tilted algebras and short chains of modules}, Math. Z.~$\bf{273}$ (2013), 19-27.



\bibitem{NRTZ} V. Nguyen, I. Reiten, G. Todorov, and S. Zhu, \emph{Dominant dimension and tilting modules}, Math Z. (2018). doi.org/10.1007/s00209-018-2111-4


\bibitem{ZITO} S. Zito, \emph{Projective dimensions and extensions of modules from tilted to cluster-tilted algebras}, J. of Algebra~$\bf{501}$ (2018), 68-87.

\bibitem{Z} S. Zito, \emph{Auslander algebras which are tilted}, arXiv:1805.10563.




\end{thebibliography}
\end{document}